\newtheoremstyle{theorem}{5pt}{5pt}{\itshape}{}{\bfseries}{.}{.5em}{}
\theoremstyle{theorem}
\newtheorem{theorem}{Theorem}
\newtheorem{lemma}[theorem]{Lemma}
\newtheorem{corollary}[theorem]{Corollary}
\newtheorem*{remark*}{Remark}
\theoremstyle{definition}
\newtheorem{definition}[theorem]{Definition}
\titlespacing*{\section}{0pt}{3.5ex plus 1ex minus .2ex}{2.3ex plus .2ex}
\titlespacing*{\section}{0pt}{3.5ex plus 1ex minus .2ex}{2.3ex plus .2ex}
\begin{document}

\title{On Fourth and Higher Moments of Short \\ Exponential Sums Related to Cusp Forms}
\author{Anne-Maria Ernvall-Hyt\"onen and Esa V\!. Vesalainen\\
\normalsize Mathematics and Statistics\\[-1mm]
\normalsize {\AA}bo Akademi University}
\date{}
\maketitle

\begin{abstract}
We obtain upper bounds for the fourth and higher moments of short exponential sums involving Fourier coefficients of holomorphic cusp forms twisted by rational additive twists with small denominators.
\end{abstract}

\section{Introduction}

Let us consider a fixed holomorphic cusp form $F$ for $\mathrm{SL}(2,\mathbb Z)$ of weight $\kappa\in\mathbb Z_+$. Then it will have the usual Fourier expansion which we will normalize so that
\[F(z)=\sum_{n=1}^\infty a(n)\,n^{(\kappa-1)/2}\,e(nz)\]
for all $z\in\mathbb C$ with $\Im z>0$. With this normalization Deligne's estimate \cite{Deligne1974} says that $a(n)\ll d(n)\ll n^\varepsilon$, for positive integers $n$, and the Rankin--Selberg estimate \cite{Rankin1939, Selberg1940} says that, for $M\in\left[1,\infty\right[$,
\[\sum_{n\leq M}\left|a(n)\right|^2=A\,M+O(M^{3/5}),\]
where $A$ is a positive real constant only depending on $F$.

It is of great interest to study exponential sums weighted by Fourier coefficients.
Wilton \cite{Wilton1929} proved essentially square root cancellation for long linear sums and Jutila \cite{Jutila1987b} removed the logarithm in Wilton's estimate leading to the best possible upper bound
\[\sum_{n\leq M}a(n)\,e(n\alpha)\ll M^{1/2},\]
uniformly true for $M\in\left[1,\infty\right[$ and $\alpha\in\mathbb R$.
The case where $\alpha$ is a reduced fraction $h/k$ with a small denominator $k$ is very interesting, and provides an interesting analogue to the classical problems of studying the error terms in the Dirichlet divisor problem or the circle problem, see e.g.\ \cite{Jutila1987a}. Jutila \cite{Jutila1987a} proved the pointwise upper bound $\ll k^{2/3}\,M^{1/3+\varepsilon}$. When $M^{1/10}\ll k\ll M^{5/18}$, this has been improved to $k^{1/4}\,M^{3/8+\varepsilon}$ in \cite{Jaasaari--Vesalainen2015, Vesalainen2014}, based on short sum estimates from \cite{Ernvall-Hytonen--Karppinen2008}.

Jutila \cite{Jutila1987a} also obtained a mean square result analogous to a twisted mean square result for the divisor function in \cite{Jutila1985}, which in turn was in the spirit of earlier work of Cram\'er \cite{Cramer1922} for the divisor problem without twists. Crudely speaking, when $k\ll M^{1/2-\varepsilon}$, the size of the sum is proportional to $k^{1/2}\,M^{1/4}$ on average. More precisely,
\[\int\limits_M^{2M}\left|\sum_{n\leqslant x}a(n)\,e\!\left(n\,\frac hk\right)\right|^2\mathrm dx=C_F\,k\,M^{3/2}+O(k^2\,M^{1+\varepsilon})+O(k^{3/2}\,M^{5/4+\varepsilon}),\]
where $C_F$ is a positive real constant depending on $F$ only. When $k\ll M^{1/6-\varepsilon}$, it was proved in \cite{Vesalainen2014} following \cite{Tsang1992} and especially \cite{Ivic--Sargos2007} that the sum is of the same average order of magnitude in the sense of fourth moments also:
\[\int\limits_M^{2M}\left|\sum_{n\leqslant x}a(n)\,e\!\left(n\,\frac hk\right)\right|^4\mathrm dx
=C_F'\,k^2\,M^2+O(k^{11/4}\,M^{15/8+\varepsilon})+O(k^{13/6}\,M^{23/12+\varepsilon}),\]
where again $C_F'$ is a positive real constant only depending on the underlying cusp form.

We are interested here in the properties of the short linear sums
\[\sum_{M\leq n\leq M+\Delta}a(n)\,e(n\alpha),\]
where $M\in\left[1,\infty\right[$, $\Delta\in\left[1,M\right]$ and $\alpha\in\mathbb R$.
The best known upper bounds for such sums are due to the first author and Karppinen \cite{Ernvall-Hytonen--Karppinen2008} with a minor improvement by J\"a\"asaari and the second author \cite{Jaasaari--Vesalainen2015}. We will specifically study the case of a rational $\alpha$ with a small denominator. The study of these short exponential sums is a natural analogue of short interval considerations of error terms in classical analytic number theory. We note that estimates for short sums can also sometimes be used to reduce smoothing error in other arguments, as is done in \cite{Jutila1987b, Ernvall-Hytonen2008, Ernvall-Hytonen--Karppinen2008, Vesalainen2014, Jaasaari--Vesalainen2015}.

Jutila \cite{Jutila1984} considered the mean square of the error term in the Dirichlet divisor problem in short intervals. The method also applies to short sums of Fourier coefficients with $\Delta\ll M^{1/2}$, leading to square root cancellation on average, see Ivi\'c \cite{Ivic2009} and Wu and Zhai \cite{Wu--Zhai2013}. The second moment of short sums of Fourier coefficients with rational additive twists was studied in \cite{Ernvall-Hytonen2011, Vesalainen2014}. The square root cancellation still holds on average as long as $k\ll\Delta^{1/2-\varepsilon}$. 
More precisely, when $1\leq\Delta\ll M^{1/2}$ and $k\ll\Delta^{1/2-\varepsilon}$, we have
\[\int\limits_M^{2M}\,\left|\sum_{x\leq n\leq x+\Delta}a(n)\,e\!\left(n\,\frac hk\right)\right|^2\mathrm dx\ll M\,\Delta.\]
The second moment of longer short sums was estimated in \cite{Ernvall-Hytonen2015}. The moment estimates give rise to the conjecture that
\[\sum_{M\leqslant n\leqslant M+\Delta}a(n)\,e\!\left(n\,\frac hk\right)\ll\min\left(\Delta^{1/2}\,M^\varepsilon,k^{1/2}\,M^{1/4+\varepsilon}\right).\]

Ivi\'c \cite{Ivic2009} considered the fourth moment of the error term in the Dirichlet divisor problem in short intervals and obtained the expected upper bound when the interval was not too short. Wu and Zhai \cite{Wu--Zhai2013} observed that the same technique works for sums of Fourier coefficients. Our first goal here is to consider the fourth moment of short exponential sums with rational additive twists with small denominators in the spirit of \cite{Ivic2009}. Tanigawa and Zhai \cite{Tanigawa--Zhai2009} extracted a main term in the case of divisor function, but we do not attempt this.

Our second goal is to estimate general higher moments through large value estimates. This follows the consideration of large values of the error term in the Dirichlet divisor problem in short intervals in \cite{Ivic--Zhai2014} and the study of higher moments of rationally additive twisted moments of long sums of holomorphic cusp form coefficients in \cite{Vesalainen2014}, which in turn followed similar study for the moments of the error term in the Dirichlet divisor problem in~\cite{Ivic1983}.

\section{Notation}

We use standard asymptotic notation. If $f$ and $g$ are complex-valued functions defined on some set, say $\Omega$, then we write $f\ll g$ to signify that $\left|f(x)\right|\leq C\left|g(x)\right|$ for all $x\in\Omega$ for some implicit constant $C\in\mathbb R_+$. The notation $O(g)$ denotes a quantity that is $\ll g$, and $f\asymp g$ means that both $f\ll g$ and $g\ll f$. The letter $\varepsilon$ denotes a positive real number, whose value can be fixed to be arbitrarily small, and whose value can be different in different instances in a proof. All implicit constants are allowed to depend on $\varepsilon$, on the implicit constants appearing in the assumptions of theorem statements, and on anything that has been fixed. When necessary, we will use subscripts $\ll_{\alpha,\beta,\dots}$, $O_{\alpha,\beta,\dots}$, etc.\ to indicate when implicit constants are allowed to depend on objects $\alpha$, $\beta$, \dots

The numbers $a(1)$, $a(2)$, \dots\ will always denote the Fourier coefficients of a fixed holomorphic cusp form $F$ of even weight $\kappa\in\mathbb Z_+$ for the full modular group $\mathrm{SL}(2,\mathbb Z)$. The Fourier coefficients are normalized so that the Fourier expansion of the cusp form is
\[F(z)=\sum_{n=1}^\infty a(n)\,n^{(\kappa-1)/2}\,e(nz)\]
for $z\in\mathbb C$ with $\Im z>0$. All implicit constants are allowed to depend on $F$.

The function $w(x)$ is a particular kind of smooth weight function, the details of which are given in Definition \ref{weight-function} below. To be precise, all implicit constants are allowed to depend on the $L^\infty$-norms of $w$ and all its derivatives.

When splitting summation ranges dyadically, we will write
\[\sum_{\substack{L\leq N/2,\\\mathrm{dyadic}}}\ldots,\]
when the summation over $L$ is to be over the values $N/2$, $N/4$, $N/8$, \dots, where $N$ is a positive real. These sums will always be finite because the summands will vanish identically for small $L$. Analogous notation will also be used for various subsums of dyadic sums.

When $h\in\mathbb Z$ and $k\in\mathbb Z_+$ are coprime, then $\overline h$ denotes an integer such that $h\overline h\equiv1\pmod k$.

\section{The Results}

Let us fix a holomorphic cusp form $F$ of an even weight $\kappa\in\mathbb Z_+$ for the full modular group $\mathrm{SL}(2,\mathbb Z)$. Then $F$ has a Fourier expansion
\[F(z)=\sum_{n=1}^\infty a(n)\,n^{(\kappa-1)/2}\,e(nz),\]
where, as usual, $z\in\mathbb C$ with $\Im z>0$. Our main theorem on fourth moments is as follows.
\begin{theorem}\label{fourth-moment-of-short-exponential-sums}
Let $M\in\left[1,\infty\right[$, let $\Delta\in\left[1,M\right]$, and let $h\in\mathbb Z$ and $k\in\mathbb Z_+$ be coprime. If $k\ll M^{-1/2}\,\Delta$ and $k\ll M^{1/4}$, then
\[
\int\limits_M^{2M}\,\left|\sum_{x\leq n\leq x+\Delta}a(n)\,e\!\left(n\,\frac{h}{k}\right)\right|^4\mathrm dx\ll k^2\,M^{2+\varepsilon}.
\]
If $k\gg M^{-1/2}\,\Delta$ and $k\ll M^{-1/4}\,\Delta^{2/3}$, then we have
\[\int\limits_M^{2M}\,\left|\sum_{x\leq n\leq x+\Delta}a(n)\,e\!\left(n\,\frac{h}{k}\right)\right|^4\mathrm dx\ll M^{1+\varepsilon}\,\Delta^2.\]
\end{theorem}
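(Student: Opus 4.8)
The plan is to reduce the fourth moment to a smoothed diagonal count via an approximate functional equation, and then to estimate the resulting correlation sums. First I would replace the sharp cutoffs $x\leqslant n\leqslant x+\Delta$ by a smooth weight $w$ (as in Definition~\ref{weight-function}) supported on an interval of length $\asymp\Delta$ near $x$, introducing at most an $M^{\varepsilon}$ loss and an error controlled by the known second-moment bound. The key tool is a Voronoi-type summation formula for $\sum_n a(n)\,e(nh/k)\,w(n)$, which transforms the twisted short sum into a dual sum over $m$ of $a(m)\,e(-m\overline h/k)$ weighted by a Bessel-type kernel; with $\Delta\ll M^{1/2}$ this dual sum is effectively of length $\asymp M/\Delta^2$ (times powers of $k$), and each term has size roughly $k^{1/2}M^{1/4}\Delta^{-1/2}$ times the coefficient. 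Squaring the modulus of the short sum then expresses it, up to negligible error, as a double sum over $m_1,m_2$, and integrating $|\cdot|^4$ over $x\in[M,2M]$ produces, after opening the square again, a quadruple sum whose $x$-integral is an oscillatory integral in $x$; stationary phase (or simply integration by parts) forces an approximate relation among the four dual variables.

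The core of the argument is then a counting problem: bounding
\[
\sum_{m_1,m_2,m_3,m_4}
\left|a(m_1)a(m_2)a(m_3)a(m_4)\right|
\]
subject to the congruence and size constraints coming from the $x$-integral and the arithmetic twist $e(m_i\overline h/k)$, where each $m_i$ ranges over an interval of length $\asymp M/\Delta^2$. Heuristically the main diagonal contribution is $m_1-m_2=m_3-m_4$ together with a second diagonal from the conjugate phases; using Deligne's bound $a(m)\ll m^{\varepsilon}$ to treat the coefficients trivially and the Rankin--Selberg estimate where a genuine average is available, the diagonal terms contribute the claimed $k^2M^{2+\varepsilon}$ (resp.\ $M^{1+\varepsilon}\Delta^2$ in the regime where the dual length is shortened by $k$). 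The off-diagonal terms are estimated by counting lattice points satisfying the approximate equation $\pm\sqrt{m_1}\pm\sqrt{m_2}\pm\sqrt{m_3}\pm\sqrt{m_4}\ll$ (something small), a standard but delicate divisor-type count; this is where the two hypotheses $k\ll M^{-1/2}\Delta$ and $k\ll M^{1/4}$ (respectively $k\ll M^{-1/4}\Delta^{2/3}$) enter, ensuring the off-diagonal does not dominate.

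The two cases of the theorem correspond to which of the two bounds in the conjectural estimate $\min(\Delta^{1/2}M^\varepsilon,\,k^{1/2}M^{1/4+\varepsilon})$ governs the size of the individual short sum: when $k\ll M^{-1/2}\Delta$ the sum is of size $k^{1/2}M^{1/4}$, giving fourth moment $k^2M^{2}$ over an interval of length $M$; when $k$ is larger the sum is of size $\Delta^{1/2}$, giving $M\Delta^2$. So the strategy is to carry out the Voronoi transformation and the $x$-integration uniformly, and then to split the analysis according to the relative sizes of $k$, $\Delta$ and $M$ at the counting stage. I expect the main obstacle to be the off-diagonal lattice-point count: obtaining a bound with only an $M^{\varepsilon}$ loss, uniformly in $h$ and $k$, requires carefully exploiting the congruence conditions modulo $k$ (so that the twist actually shortens the effective dual range by a factor of $k$) together with a clean estimate for the number of solutions of the square-root equation in short intervals, in the spirit of Ivi\'c's treatment of the divisor problem and the fourth-moment arguments of \cite{Ivic--Sargos2007} and \cite{Vesalainen2014}.
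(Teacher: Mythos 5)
Your plan is in the same broad family as the paper's argument---Voronoi transformation, fourth-power expansion, an oscillatory $x$-integral, and a spacing count for quadruples of square roots (the paper uses Robert--Sargos, Corollary~\ref{spacing-corollary}, for this). But the technical route you sketch is different and, as sketched, has a genuine gap. The paper does \emph{not} smooth the $n$-sum: it applies the truncated Voronoi identity (Theorem~\ref{truncated-voronoi-identity-for-sums}) to each of $\sum_{n\le x+\Delta}$ and $\sum_{n\le x}$ with a cut-off $N$ chosen as $M^{1/2}k$ or $k^2M\Delta^{-1}$ according to the regime, and smooths only the $x$-integral. It then splits the resulting dyadic sum over $L$ into three ranges and treats each differently. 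Crucially, in the low-frequency range $L\ll Y=k^2M\Delta^{-2}$ (present only when $k\gg M^{-1/2}\Delta$), one must first replace $(x+\Delta)^{1/4}$ by $x^{1/4}$ and rewrite the cosine difference via $\cos(2\xi-\pi/4)-\cos(2\eta-\pi/4)=2\sin(\xi-\eta)\cos(\xi+\eta+\pi/4)$ to expose the small factors $S(n)=\sin\bigl(2\pi\sqrt n(\sqrt{x+\Delta}-\sqrt x)/k\bigr)\ll\sqrt n\,\Delta/(k\sqrt M)$. These factors are exactly what produce the bound $M^{1+\varepsilon}\Delta^2$ in the second regime; without them the low-frequency dual terms are not small enough, and a plain diagonal-plus-spacing count fails. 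Your sketch does not surface this cancellation: it treats the Voronoi dual sum as a single object of effective length $\asymp M/\Delta^2$ with terms of uniform size, opens the fourth power, and counts near-solutions of $\pm\sqrt{m_1}\pm\sqrt{m_2}\pm\sqrt{m_3}\pm\sqrt{m_4}\approx0$. That handles the middle range (where the paper indeed does exactly this, via Lemma~\ref{jutila-motohashi-lemma} and Corollary~\ref{spacing-corollary}) but not the high-frequency tail, which the paper instead feeds back through the truncated Voronoi identity and Lemma~\ref{second-moment-pienet-termit}, nor the low-frequency head.

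Two further points. The dual length is not simply $M/\Delta^2$: the oscillation is $\sqrt{n x}/k$, so to capture an interval of length $\Delta$ one needs $n$ up to roughly $k^2M/\Delta^2$, and the paper's thresholds $Y,X$ and $N$ all carry the requisite powers of $k$; without them the hypotheses $k\ll M^{1/4}$ and $k\ll M^{-1/4}\Delta^{2/3}$ cannot enter cleanly. Also, the ``diagonal'' for square-root phases is $\sqrt{m_1}+\sqrt{m_2}\approx\sqrt{m_3}+\sqrt{m_4}$, not $m_1-m_2=m_3-m_4$; you correct this implicitly later, but the earlier statement is misleading. To turn your sketch into a proof you would either need to reproduce the paper's three-way dyadic split and the sine-factor extraction (Lemma~\ref{sinitulo}), or replace the truncated Voronoi identity by a genuine smooth Voronoi summation for the $n$-sum and then verify that the decay of the resulting Bessel/oscillatory weight reproduces the $S(n)^4$ gain at low frequencies and the $L^{-1}$ decay at high frequencies with the same exponents; neither step is automatic.
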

\noindent
In the proof we shall mostly, but not entirely, argue analogously to the proof of Theorem 4 in \cite{Ivic2009}. 

We wish to detect cancellation in higher moments of short exponential sums. For this purpose, we will estimate the rarity of large values of short exponential sums as follows.
\begin{theorem}\label{large-values}
Let $M,V\in\left[1,\infty\right[$, let $\Delta\in\left[1,M\right]$, let $\delta\in\mathbb R_+$ be fixed, and let $h$ and $k$ be coprime integers with $1\leq k\leq M$, and assume that $k\,M^{2\delta}\ll V\ll k\,M^{1/2+\delta}$. Let $x_1,x_2,\ldots,x_R\in\left[M,2M\right]$, where $R\in\mathbb Z_+$, and assume that $\left|x_i-x_j\right|\geq V$ for $i,j\in\left\{1,2,\ldots,R\right\}$ with $i\neq j$. Fix an exponent pair $\left\langle p,q\right\rangle\in\left]0,1/2\right]\times\left[1/2,1\right]$. If
\[\sum_{x_i\leq n\leq x_i+\Delta}a(n)\,e\!\left(n\,\frac{h}k\right)\gg V\]
for each $i\in\left\{1,2,\ldots,R\right\}$, and
$k^{2/3}\,\Delta^{2/3}\,M^{-1/3+\delta}\ll V\ll\Delta\,M^\delta$,
then
\[R\ll k^2\,M^{1+7\delta}\,\Delta^2\,V^{-5}
+k^{2q/p}\,\Delta^{2+2/p}\,M^{1+q/p+\delta\left(6+5/p+2q/p\right)}\,V^{-2q/p-4-3/p}.\]
\end{theorem}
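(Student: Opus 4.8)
The plan is to derive the large-values estimate from a combination of two ingredients: an $L^2$ (second-moment) bound for the short exponential sum, and a pointwise/transformation-formula bound coming from an exponent pair applied to a Vorono\"i-type expansion of the sum. The basic mechanism is the standard large-values argument: if the sum at each $x_i$ exceeds $V$, then squaring and summing over the well-spaced points $x_i$, and comparing with the second moment over $[M,2M]$, already yields a bound of shape $R\,V^2\ll$ (second moment)$\,/\,(\text{spacing})$; but this alone is too weak, and one must feed in a good pointwise bound $\left|\sum_{x_i\le n\le x_i+\Delta}a(n)\,e(nh/k)\right|\ll W$ for some $W$ depending on the exponent pair $\langle p,q\rangle$, $k$, $\Delta$, $M$, which then lets one replace one factor of $V$ by $W$ and rebalance. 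Since the final bound for $R$ is a sum of two terms, I expect the argument to produce the first term $k^2M^{1+7\delta}\Delta^2V^{-5}$ from the mean-square/spacing comparison and the second, more elaborate term with the $p,q$-dependent exponents from the exponent-pair input.

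Concretely, first I would recall (from \cite{Ernvall-Hytonen2011, Vesalainen2014}, as quoted in the introduction) the smoothed second-moment estimate
\[
\int_M^{2M}\left|\sum_{x\le n\le x+\Delta}a(n)\,e\!\left(n\,\frac hk\right)\right|^2\mathrm dx\ll M\,\Delta\,M^\varepsilon
\]
in the relevant range, or more precisely the twisted mean-square with its error terms, keeping the $k$-dependence explicit; this is what controls the ``first term''. Because the $x_i$ are $V$-separated and each contributes $\gg V^2$ to a suitably smoothed version of this integral (using that the summand is essentially constant on intervals of length $\ll V$ once $V\ll\Delta M^\delta$), one gets $R\,V^2\cdot V\ll k\,M^{1+\varepsilon}\,\Delta$, i.e.\ a bound $R\ll k\,M^{1+\varepsilon}\Delta\,V^{-3}$; the stated exponent $V^{-5}$ together with the extra $k\,\Delta$ indicates that one should instead insert the pointwise size $V\ll\Delta M^\delta$ twice, or use a fourth-moment-flavoured comparison, to convert $R\,V^2$-type information into $R\,V^5$-type information. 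I would then carefully track the $M^\delta$ losses from smoothing, the replacement of hard cutoffs by smooth ones (Definition \ref{weight-function}), and the range assumptions $k\,M^{2\delta}\ll V\ll k\,M^{1/2+\delta}$ and $k^{2/3}\Delta^{2/3}M^{-1/3+\delta}\ll V\ll\Delta M^\delta$ to make sure everything is admissible.

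For the ``second term'' I would bring in the exponent pair $\langle p,q\rangle$. Starting from a Vorono\"i summation / approximate functional equation for $\sum_{n}a(n)\,e(nh/k)$ with a smooth weight localised to $[x,x+\Delta]$, one obtains a dual sum over $m$ of length roughly $k^2M/\Delta^2$ (the usual reciprocal-length phenomenon), with each term carrying an exponential phase of the shape $e(c\sqrt{mx}/k)$ up to lower-order corrections. Applying the exponent pair $\langle p,q\rangle$ to this dual exponential sum (as is done for long sums in \cite{Vesalainen2014} and for the divisor problem in \cite{Ivic--Zhai2014}) gives a pointwise bound of the form
\[
\sum_{x_i\le n\le x_i+\Delta}a(n)\,e\!\left(n\,\frac hk\right)\ll k^{\,?}\,\Delta^{\,?}\,M^{\,?}\,M^{C\delta}
\]
with exponents that are affine functions of $p$ and $q$; matching this against the requirement that it be $\ll V$, and rebalancing $R\,V^{\text{(something)}}\ll$ (dual sum count) $\times$ (mean value), produces precisely the term $k^{2q/p}\,\Delta^{2+2/p}\,M^{1+q/p+\delta(6+5/p+2q/p)}\,V^{-2q/p-4-3/p}$. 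The main obstacle, and where the bulk of the work lies, is the \emph{bookkeeping of the Vorono\"i transform in the presence of the rational twist $h/k$}: one must control the arithmetic factor (a Gauss-type sum in $\overline h \bmod k$), get the dual variable ranges and the phase exactly right so that the exponent pair applies cleanly, and then propagate all exponents — including every $\delta$-power — through the large-values optimisation without slack. A secondary technical point is justifying that on each $V$-window the short sum is genuinely close to its value at $x_i$, which requires $V$ to be small compared to $\Delta$ (hence the hypothesis $V\ll\Delta M^\delta$) and a short-interval estimate of Jutila type to bound the fluctuation $\sum_{x_i\le n\le x}a(n)e(nh/k)$ for $x\in[x_i,x_i+V]$.
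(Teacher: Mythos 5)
Your plan has the right flavour (Vorono\"i expansion plus exponent pairs), but two essential mechanisms of the proof are missing, and the steps you propose in their place do not work. First, the mean-square comparison giving the ``first term'' fails on several counts: the second-moment bound you quote is only known for $\Delta\ll M^{1/2}$, $k\ll\Delta^{1/2-\varepsilon}$, which is not assumed here; the claim that the short sum is ``essentially constant'' on a window of length $\asymp V$ around $x_i$ is false in general, since shifting $x$ by an amount $\leq V$ changes $\sum_{x\leq n\leq x+\Delta}a(n)e(nh/k)$ by up to $\ll V\,M^{\varepsilon}$ (about $2V$ boundary terms of size $M^{\varepsilon}$), which is comparable to $V$ itself, so largeness at $x_i$ need not persist; and inserting $V\ll\Delta M^{\delta}$ into $R\ll k\,M^{1+\varepsilon}\Delta V^{-3}$ yields $k\,M^{1+2\delta}\Delta^{3}V^{-5}$, not $k^{2}M^{1+7\delta}\Delta^{2}V^{-5}$ --- the powers of $k$ and $\Delta$ come out wrong. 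In the actual proof the exponent $V^{-5}$ has a different origin: the truncation length in Theorem \ref{truncated-voronoi-identity-for-sums} is chosen $V$-dependently, $N\asymp k^{2}M^{1+2\delta}V^{-2}$ (the smallest $N$ for which the error $k\,M^{1/2+\delta}N^{-1/2}$ can be absorbed into $V$), the difference of the two cosines is written as a sine integral, each interval $[x_r,x_r+\Delta]$ is covered by $\ll\Delta M^{\delta}V^{-1}$ windows of length $V$ on which one takes maximal points of the Vorono\"i main term, and after Cauchy--Schwarz the diagonal contributes $\ll k^{-1}M^{4\delta-1/2}\Delta^{2}V^{-2}N^{3/2}\ll k^{2}M^{1+7\delta}\Delta^{2}V^{-5}$. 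This maximal-point device is precisely what replaces your (unjustified) persistence claim.

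Second, for the $\langle p,q\rangle$-term you propose to apply the exponent pair directly to the dual Vorono\"i sum and extract a pointwise bound, but that sum carries the coefficients $a(n)$, and exponent pairs apply only to plain exponential sums; some coefficient-separation device is indispensable. The paper uses Bombieri's lemma (the duality inequality in Section 4.3) to reduce matters to the bilinear quantities $\sum_{U<n\leq2U}e\bigl(2\sqrt n\,(\sqrt{t_{v_r}}-\sqrt{t_{v_s}})/k\bigr)$, and only then invokes $\langle p,q\rangle$; moreover the frequencies $|\sqrt{t_{v_r}}-\sqrt{t_{v_s}}|/k\ll M_0 k^{-1}M^{-1/2}$ are controlled by a preliminary subdivision of $[M,2M]$ into blocks of length $M_0$, the exponent-pair contribution is absorbed into $R_0$ by choosing $M_0$ optimally, and the second term of the theorem is exactly the factor $M/M_0$ multiplying the first bound --- not a pointwise estimate fed into a ``rebalancing''. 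Without the coefficient separation, the block decomposition with its optimisation in $M_0$, and the $V$-dependent choice of $N$, your outline cannot produce the stated exponents. (A minor point: there is no Gauss-sum bookkeeping to do here; the additively twisted Vorono\"i identity simply replaces $a(n)$ by $a(n)\,e(-n\overline h/k)$.)
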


\begin{remark*}
Naturally, the sums in question are always $\ll\Delta\,M^\varepsilon$ and $\ll\sqrt M$, so the condition $V\ll k\,M^{1/2+\varepsilon}$ will certainly be satisfied in any reasonable application of the result.
\end{remark*}

\begin{theorem}\label{moments-from-large-values}
Let $M\in\left[1,\infty\right[$ and $\Delta\in\left[1,M\right]$, and let $h$ and $k$ be coprime integers with $1\leq k\leq M$.
Let $\alpha,\beta,\gamma\in\mathbb R$ be fixed so that
\[\sum_{x\leq n\leq x+\Delta}a(n)\,e\!\left(n\,\frac{h}k\right)\ll k^\alpha\,\Delta^\beta\,M^\gamma\]
for $x\in\left[M,2M\right]$.
Let $V_0\in\left[1,\infty\right[$ be a parameter such that $k\ll V_0\ll k^\alpha\,\Delta^\beta\,M^\gamma$ and $V_0\gg k^{2/3}\,\Delta^{2/3}\,M^{-1/3}$.
Also, let $A\in\left[2,\infty\right[$ be fixed and let $\left\langle p,q\right\rangle\in\left]0,1/2\right]\times\left[1/2,1\right]$ be a fixed exponent pair. Then,
\[\int\limits_M^{2M}\,\left|\sum_{x\leq n\leq x+\Delta}a(n)\,e\!\left(n\,\frac{h}k\right)\right|^A\mathrm dx\ll M^{1+\varepsilon}\,V_0^A+\Phi+\Psi,\]
where
\[\Phi=\left\{\!\!\begin{array}{ll}
k^2\,M^{1+\varepsilon}\,\Delta^2\,V_0^{A-4}&\text{if $A\leq4$, and}\\
k^{\alpha A-4\alpha+2}\,\Delta^{\beta A-4\beta+2}\,M^{\gamma A-4\gamma+1+\varepsilon}&\text{if $A\geq4$,}
\end{array}\right.\]
and
\[\Psi=\begin{cases}
k^{2q/p}\,\Delta^{2+2/p}\,M^{1+q/p+\varepsilon}\,V_0^{A-2q/p-3-3/p}\quad\text{if $A\leq2q/p+3+3/p$,}\\
k^{2q/p}\,\Delta^{2+2/p}\,M^{1+q/p+\varepsilon}\left(k^\alpha\,\Delta^\beta\,M^\gamma\right)^{A-2q/p-3-3/p}\quad\text{otherwise.}
\end{cases}\]
\end{theorem}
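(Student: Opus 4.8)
The plan is to derive the higher moment bound from the large value estimate in Theorem~\ref{large-values} via a standard dyadic decomposition of the range of the sum. Write $S(x)=\sum_{x\leq n\leq x+\Delta}a(n)\,e(nh/k)$. First I would split the interval $[M,2M]$ into the set where $|S(x)|\leq V_0$, which contributes at most $M^{1+\varepsilon}\,V_0^A$ to the integral, and the remaining set where $|S(x)|>V_0$. On the latter set, I would decompose dyadically according to the size of $|S(x)|$: for each dyadic $V$ with $V_0\ll V\ll k^\alpha\,\Delta^\beta\,M^\gamma$ (the upper limit coming from the assumed pointwise bound, which is legitimate since all implicit constants may depend on $F$), consider the set $E_V=\{x\in[M,2M]:|S(x)|\asymp V\}$ and its contribution $\ll V^A\,\lambda(E_V)$, where $\lambda$ is Lebesgue measure.

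The key step is to bound $\lambda(E_V)$ using Theorem~\ref{large-values}. Since $S$ does not oscillate too wildly — more precisely, a short-interval continuity or mean value argument shows that if $|S(x)|\asymp V$ then $|S(x')|\gg V$ for all $x'$ in a subinterval of length $\asymp V^2/(k\,M^\varepsilon)$ around $x$, using the trivial derivative-type estimate $S(x')-S(x)\ll$ (number of $n$ added or removed)$\cdot M^\varepsilon$ — one can extract from $E_V$ a set of $R$ well-spaced points $x_1,\dots,x_R$ with pairwise distance $\geq V$ such that $R\,V^2/(k\,M^\varepsilon)\gg\lambda(E_V)$. Actually, to match the hypotheses of Theorem~\ref{large-values}, I would choose the spacing parameter to be exactly a fixed multiple of $V$ and use a standard covering argument: $\lambda(E_V)\ll R\,V\,M^\varepsilon$ where $R$ is the maximal number of points of $E_V$ that are pairwise $\geq V$ apart and where each still satisfies $|S(x_i)|\gg V$. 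Then I would feed this $R$ into Theorem~\ref{large-values} with $\delta=\varepsilon$, obtaining
\[
\lambda(E_V)\ll V\,M^\varepsilon\left(k^2\,M^{1+\varepsilon}\,\Delta^2\,V^{-5}+k^{2q/p}\,\Delta^{2+2/p}\,M^{1+q/p+\varepsilon}\,V^{-2q/p-4-3/p}\right),
\]
valid in the range where the hypotheses of Theorem~\ref{large-values} hold, namely $k^{2/3}\Delta^{2/3}M^{-1/3+\varepsilon}\ll V\ll\Delta M^\varepsilon$ together with $k\,M^{2\varepsilon}\ll V$ (all guaranteed by $V\geq V_0$ and the assumptions on $V_0$), and for $V\gg\Delta M^\varepsilon$ one instead uses the trivial bound $\lambda(E_V)\ll M$ directly, but since $V\leq k^\alpha\Delta^\beta M^\gamma$ and $S\ll\Delta M^\varepsilon$ one may as well assume the main range covers everything relevant, or handle the short leftover range $\Delta M^\varepsilon\ll V\ll k^\alpha\Delta^\beta M^\gamma$ separately with the crude measure bound.

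Multiplying by $V^A$ and summing the resulting geometric-type series over dyadic $V$ gives the two terms $\Phi$ and $\Psi$. For the $\Phi$-term one gets $V^A\cdot V^{1-5}=V^{A-4}$ up to $k^2\Delta^2 M^{1+\varepsilon}$, whose dyadic sum is dominated by the endpoint: if $A\leq 4$ the exponent $A-4$ is nonpositive so the sum is $\ll V_0^{A-4}$, giving the first case; if $A\geq 4$ it is dominated by the top value $V=k^\alpha\Delta^\beta M^\gamma$, giving $k^2\Delta^2 M^{1+\varepsilon}(k^\alpha\Delta^\beta M^\gamma)^{A-4}=k^{\alpha A-4\alpha+2}\Delta^{\beta A-4\beta+2}M^{\gamma A-4\gamma+1+\varepsilon}$, the second case. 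The $\Psi$-term is handled identically with the threshold exponent $2q/p+3+3/p$ in place of $4$. The main obstacle is the bookkeeping: one must check that the hypotheses of Theorem~\ref{large-values} (in particular $k^{2/3}\Delta^{2/3}M^{-1/3+\delta}\ll V\ll\Delta M^\delta$ and $kM^{2\delta}\ll V\ll kM^{1/2+\delta}$) are satisfied throughout the dyadic range $V_0\ll V\ll k^\alpha\Delta^\beta M^\gamma$ — this is exactly what the lower bound hypotheses on $V_0$ (namely $k\ll V_0$ and $V_0\gg k^{2/3}\Delta^{2/3}M^{-1/3}$) are designed to ensure for the lower endpoints — and that the contribution from any $V$ outside the valid range (where one falls back on $\lambda(E_V)\ll M$) is absorbed into the stated bound; absorbing those edge cases and verifying the exponent arithmetic in the two branches of $\Phi$ and $\Psi$ is where care is needed, but no new idea is required beyond what is in Theorem~\ref{large-values}.
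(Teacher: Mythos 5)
Your proposal is correct and follows essentially the same route as the paper: split off the set where the sum is $\ll V_0$ (up to an $M^{\varepsilon}$ factor), decompose the large values dyadically, bound the measure of each level set by $R(V)\cdot V$ via a maximal $V$-spaced set of points, feed $R(V)$ into Theorem~\ref{large-values}, and sum over dyadic $V$ estimating $V$ by $V_0$ or by $k^{\alpha}\,\Delta^{\beta}\,M^{\gamma}$ according to the sign of the exponent, which yields $\Phi$ and $\Psi$. The only bookkeeping point you flagged is handled in the paper by cutting at $M^{2\delta}V_0$ rather than at $V_0$ (and noting $R(V)=0$ for $V\gg\Delta M^{\delta}$), so that the lower- and upper-range hypotheses of Theorem~\ref{large-values} hold throughout the dyadic range, after which $\delta$ is taken arbitrarily small.
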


It is of course not immediately obvious what exactly this implies. Possible interesting choices for $\left\langle\alpha,\beta,\gamma\right\rangle$ are the estimate via absolute values $\left\langle0,1,\varepsilon\right\rangle$ made possible by Deligne's famous work \cite{Deligne1974}, the estimate for short sums $\left\langle0,1/6,1/3+\varepsilon\right\rangle$ due to \cite[Theorem~5.5]{Ernvall-Hytonen--Karppinen2008} which holds when $\Delta\ll M^{2/3}$ \cite[Theorem~3]{Jaasaari--Vesalainen2015}, the classical pointwise estimate $\left\langle2/3,0,1/3+\varepsilon\right\rangle$ \cite[Corollary on p.~30]{Jutila1987a}, as well as the improved pointwise bound $\left\langle1/4,0,3/8+\varepsilon\right\rangle$ which holds for $M^{1/10}\ll k\ll M^{1/4}$ \cite[Theorem~1]{Vesalainen2014} as well as for $M^{1/4}\ll k\ll M^{5/18}$ \cite[Corollary~5]{Jaasaari--Vesalainen2015}.

Let us consider as an example large values of $A$ and $k$ with sums of length $\Delta\asymp M^{5/12}$. We obtain the following upper bound.
\begin{theorem}\label{example-bound1}
Let $M\in\left[1,\infty\right[$ and let $\Delta\in\left[1,\infty\right[$ with  $\Delta\asymp M^{5/12}$. Furthermore,  let $A\in\left[11,\infty\right[$ be fixed, and let $h$ and $k$ be coprime integers with $k$ positive and assume that $M^{1/9}\ll k\ll M^{7/18}$. Then
\[
\int\limits_M^{2M}\left|\sum_{x\leq n\leq x+\Delta}a(n)\,e\!\left(n\,\frac{h}{k}\right)\right|^A dx\ll k^2\,M^{11/6+29(A-4)/72+\varepsilon}.
\]
\end{theorem}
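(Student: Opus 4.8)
The plan is to apply Theorem~\ref{moments-from-large-values} with a carefully chosen exponent pair, a suitable pointwise bound $\left\langle\alpha,\beta,\gamma\right\rangle$, and a well-chosen threshold $V_0$, and then to check that under the hypotheses $\Delta\asymp M^{5/12}$, $M^{1/9}\ll k\ll M^{7/18}$, and $A\geq11$, the term $\Phi$ (in its $A\geq4$ branch) dominates both $M^{1+\varepsilon}V_0^A$ and $\Psi$. First I would fix the pointwise estimate to be the improved bound $\left\langle1/4,0,3/8+\varepsilon\right\rangle$, which by \cite[Theorem~1]{Vesalainen2014} and \cite[Corollary~5]{Jaasaari--Vesalainen2015} is valid precisely in the range $M^{1/10}\ll k\ll M^{5/18}$; since we only assume $k\ll M^{7/18}$, I expect that one actually splits into $k\ll M^{5/18}$ (use the $1/4$-bound) and $M^{5/18}\ll k\ll M^{7/18}$ (fall back on a weaker bound such as $\left\langle2/3,0,1/3+\varepsilon\right\rangle$ or on the trivial $\left\langle0,1,\varepsilon\right\rangle$ which for $\Delta\asymp M^{5/12}$ gives exponent $5/12$), and checks the stated bound survives in both pieces. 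With $\alpha=1/4$, $\beta=0$, $\gamma=3/8$ the $A\geq4$ branch of $\Phi$ becomes $k^{A/4-4\cdot1/4+2}\,\Delta^{0}\,M^{3A/8-4\cdot3/8+1+\varepsilon}=k^{A/4+1}\,M^{3A/8-1/2+\varepsilon}$, which is not yet of the claimed shape $k^2\,M^{11/6+29(A-4)/72+\varepsilon}$, so one sees that the dominant contribution must instead come from the $M^{1+\varepsilon}V_0^A$ term with $V_0$ chosen at the crossover, or from $\Psi$; the bookkeeping of which of the three terms wins for $A\geq11$ is the crux.

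Concretely I would choose $V_0$ to balance $M^{1+\varepsilon}V_0^A$ against $\Phi$: setting $M\,V_0^A\asymp k^2\,\Delta^2\,M^{1+\varepsilon}\,V_0^{A-4}$ would give $V_0^4\asymp k^2\,\Delta^2\,M^\varepsilon$, i.e.\ $V_0\asymp k^{1/2}\,\Delta^{1/2}\,M^\varepsilon\asymp k^{1/2}\,M^{5/24+\varepsilon}$, which is the natural ``mean-square size'' threshold and satisfies $V_0\gg k$ (using $k\ll M^{7/18}$ so $k^{1/2}\ll M^{7/36}$, compared with $M^{5/24}$... here one must be slightly careful since $7/36>5/24$ fails, so actually $V_0\gg k$ needs $k\ll M^{5/12}$, which holds) and $V_0\gg k^{2/3}\Delta^{2/3}M^{-1/3}\asymp k^{2/3}M^{-1/18}$. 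Then $M\,V_0^A\asymp M^{1+\varepsilon}\,k^{A/2}\,M^{5A/24}$; I would then verify that $k^{A/2}\,M^{5A/24}\ll k^2\,M^{11/6-1+29(A-4)/72}=k^2\,M^{5/6+29(A-4)/72}$, i.e.\ after substituting the extremal $k\asymp M^{7/18}$ that $M^{7A/36}\,M^{5A/24}\ll M^{7/9}\,M^{5/6}\,M^{29(A-4)/72}$; the exponent on the left is $(7/36+5/24)A=(14/72+15/72)A=29A/72$, and on the right it is $29A/72-29\cdot4/72+7/9+5/6=29A/72-29/18+14/18+15/18=29A/72$. Thus the two sides match exactly at $k\asymp M^{7/18}$, which explains the shape of the claimed bound and confirms $29/72$ is forced; for smaller $k$ the left side is smaller, so the inequality is fine.

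Having pinned the main term, the remaining work is routine checking: (i) confirm $k\ll V_0\ll k^\alpha\Delta^\beta M^\gamma$, i.e.\ $k^{1/2}M^{5/24}\ll k^{1/4}M^{3/8}$, which reduces to $k^{1/4}\ll M^{1/6}$, i.e.\ $k\ll M^{2/3}$, true; (ii) check the hypothesis $k^{2/3}\Delta^{2/3}M^{-1/3+\delta}\ll V$ and $V\ll\Delta M^\delta$ implicit in Theorem~\ref{large-values} hold along the way, which for $\Delta\asymp M^{5/12}$ and the relevant $V\asymp V_0$ amounts to lower bound $M^{1/9}\ll k$ (exactly the stated hypothesis on $k$!) and an upper bound that is comfortable; (iii) choose the exponent pair $\left\langle p,q\right\rangle$ optimally in $\Psi$ — I expect the choice $\left\langle p,q\right\rangle=\left\langle1/2,1/2\right\rangle$ (the trivial/Weyl-type endpoint, or perhaps $\langle 1/6,2/3\rangle$) to make $\Psi$ negligible, and I would simply plug into the $\Psi$-formula, substitute $\Delta\asymp M^{5/12}$, $V_0\asymp k^{1/2}M^{5/24}$, and the extremal $k$, and verify $\Psi\ll k^2M^{11/6+29(A-4)/72+\varepsilon}$ for all $A\geq11$ — the lower bound $A\geq11$ is presumably exactly what is needed for the $\Psi$-exponent (which decreases in... or increases in $A$ with a smaller slope) to stay under control, and also for the case distinction in $\Psi$ to land in the first (``$A\leq 2q/p+3+3/p$'') branch or to be dominated regardless. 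The main obstacle I anticipate is this last verification that $\Psi$ does not exceed the claimed bound across the whole range $M^{1/9}\ll k\ll M^{7/18}$ and $A\geq11$ simultaneously — it is a two-parameter inequality between monomials in $M$, $k$, $A$, and one has to be careful that the worst case really is $k\asymp M^{7/18}$ and not an interior point, and that the threshold $A=11$ is where the $\Psi$-contribution (or the second branch of $\Phi$) just ceases to dominate.
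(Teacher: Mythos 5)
Your plan has a genuine gap, and it is exactly at the step you postponed as ``routine'': the verification that $\Phi$ (and $\Psi$) stay below the claimed bound with your choice of pointwise estimate. First, a computational slip: with $\left\langle\alpha,\beta,\gamma\right\rangle=\left\langle1/4,0,3/8\right\rangle$ the $A\geq4$ branch is $\Phi=k^{(A-4)/4+2}\,\Delta^{2}\,M^{3(A-4)/8+1+\varepsilon}$ (the factor is $\Delta^{\beta A-4\beta+2}=\Delta^{2}$, not $\Delta^{0}$). Since the claimed bound is exactly $k^{2}\,\Delta^{2}\,M^{1+\varepsilon}\left(M^{29/72}\right)^{A-4}$ (because $\Delta^{2}M=M^{11/6}$), comparing $\Phi$ with the claim reduces to comparing the pointwise bound $k^{1/4}M^{3/8}=k^{1/4}M^{27/72}$ with $M^{29/72}$; and $k^{1/4}M^{27/72}\gg M^{29/72}$ precisely when $k\gg M^{1/9}$, i.e.\ throughout the entire stated range your $\Phi$ overshoots the claimed bound by the factor $\bigl(k^{1/4}M^{-1/36}\bigr)^{A-4}$, which is a positive power of $M$. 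The same happens to $\Psi$ in its second branch, and your fallback bounds for $M^{5/18}\ll k\ll M^{7/18}$ (Jutila's $k^{2/3}M^{1/3}$, or the trivial $\Delta M^{\varepsilon}=M^{5/12+\varepsilon}=M^{30/72+\varepsilon}$) are also $\gg M^{29/72}$, so they fail as well. Moreover, balancing $M^{1+\varepsilon}V_0^A$ against $k^{2}\Delta^{2}M^{1+\varepsilon}V_0^{A-4}$ is not meaningful here: for $A\geq11$ the relevant branch of $\Phi$ does not contain $V_0$ at all, and the claimed bound does not come from the main term (your observation that $MV_0^A$ matches the claim at $k\asymp M^{7/18}$ is a boundary coincidence, not the source of the exponent $29/72$). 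Finally, the hypothesis $k\gg M^{1/9}$ does not arise from the spacing conditions of Theorem \ref{large-values}, as you guessed.

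The missing idea is the choice of pointwise bound: the paper uses the short-sum estimate of Ernvall-Hyt\"onen and Karppinen, $\sum_{x\leq n\leq x+\Delta}a(n)\,e(nh/k)\ll\Delta^{1/6}M^{1/3+\varepsilon}$ (valid since $\Delta\asymp M^{5/12}\ll M^{2/3}$), i.e.\ $\left\langle\alpha,\beta,\gamma\right\rangle=\left\langle0,1/6,1/3+\varepsilon\right\rangle$, which for $\Delta\asymp M^{5/12}$ gives the $k$-independent bound $M^{29/72+\varepsilon}$. This beats $k^{1/4}M^{3/8}$ exactly when $k\gg M^{1/9}$ -- that is where the lower bound on $k$ and the exponent $29/72$ in the statement come from -- and with it $\Phi$ is precisely the claimed quantity $k^{2}M^{11/6+29(A-4)/72+\varepsilon}$. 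The rest of the paper's proof is then short: take $p=q=1/2$, take $V_0=k$ (as small as possible, since $V_0$ occurs only in the main term; the admissibility conditions $V_0\gg k^{2/3}\Delta^{2/3}M^{-1/3}$ and $V_0\ll M^{29/72}$ are immediate from $k\ll M^{7/18}$), and check that both the main term $k^{A}M^{1+\varepsilon}$ (using $k\ll M^{7/18}$ and $A\geq11$) and the $\Psi$-term $k^{2}M^{9/2+29(A-11)/72+\varepsilon}$ (smaller by $M^{11/72}$) are dominated by $\Phi$.
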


As another example, let us consider moments with $A\leq11$ and $k=1$ of sums of length $\ll M^{4/9}$.
In the following theorem, some of the ranges are treated using similar moment results for long sums from \cite{Vesalainen2014}.
\begin{theorem}\label{sovellus2}
Let $M,\Delta\in\left[1,\infty\right[$ with $M^{1/5}\ll\Delta \ll M^{4/9}$ and let $A\in\left[4,11\right]$ be fixed. Then we have
\begin{multline*}
\int\limits_M^{2M}\left|\sum_{x\leq n\leq x+\Delta}a(n)\right|^Adx
\\
\ll \begin{cases} M^{A/11+1+\varepsilon}\,\Delta^{6A/11} & \text{when }A\leq 8\ \text{and}\ \Delta \ll M^{7/24},\\
M^{A/4+1+\varepsilon} & \text{when }A\leq 8\ \text{and}\ \Delta\gg M^{7/24},\\
M^{A/11+1+\varepsilon}\,\Delta^{6A/11} & \text{when }A\geq 8\ \text{and}\ \Delta\ll M^{4/9-11/(9A)},\\
 M^{(A+1)/3+\varepsilon} & \text{when }A\geq 8\ \text{and }\Delta \gg M^{4/9-11/(9A)}.\end{cases}\end{multline*}
\end{theorem}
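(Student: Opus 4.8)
The plan is to treat the claimed bounds in two regimes according to the size of $\Delta$: for $\Delta$ large (the second and fourth cases) I reduce to moments of long sums, and for $\Delta$ small (the first and third cases) I apply Theorem~\ref{moments-from-large-values}. The threshold between the regimes, $\Delta\asymp M^{7/24}$ for $A\leq8$ and $\Delta\asymp M^{4/9-11/(9A)}$ for $A\geq8$, is precisely the point at which the two resulting bounds coincide, so the overall estimate is continuous in $\Delta$; moreover $4/9-11/(9A)=7/24$ at $A=8$, so the four cases fit together across $A=8$ as well.

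For the large-$\Delta$ regime I would write $\sum_{x\leq n\leq x+\Delta}a(n)=E(x+\Delta)-E(x)$ with $E(y)=\sum_{n\leq y}a(n)$, so that the integrand is $\ll\left|E(x+\Delta)\right|^A+\left|E(x)\right|^A$, and, using $\Delta\leq M$, bound the integral by $\ll\int_0^{3M}\left|E(y)\right|^A\,\mathrm dy$. Into this I feed the higher-moment estimates for long sums from \cite{Vesalainen2014}, which for $k=1$ furnish $\int_0^T\left|E(y)\right|^A\,\mathrm dy\ll T^{1+A/4+\varepsilon}$ for $2\leq A\leq8$; for $A\geq8$ I would combine the case $A=8$ of this with the pointwise bound $E(y)\ll y^{1/3+\varepsilon}$ coming from \cite{Deligne1974} via $\int_0^T\left|E\right|^A\leq\bigl(\sup_{[0,T]}\left|E\right|\bigr)^{A-8}\int_0^T\left|E\right|^8$ to get $\ll T^{(A+1)/3+\varepsilon}$. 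This gives exactly the second and fourth asserted bounds.

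For the small-$\Delta$ regime I would apply Theorem~\ref{moments-from-large-values} with $k=1$, the exponent pair $\left\langle p,q\right\rangle=\left\langle1/2,1/2\right\rangle$, the pointwise input $\left\langle\alpha,\beta,\gamma\right\rangle=\left\langle0,1,\varepsilon\right\rangle$ (Deligne's bound), and $V_0=\Delta^{6/11}\,M^{1/11}$. Then the leading term is $M^{1+\varepsilon}\,V_0^A=M^{A/11+1+\varepsilon}\,\Delta^{6A/11}$, which is the first and third asserted bounds; since $2q/p+3+3/p=11\geq A$ for every $A\in\left[4,11\right]$, the term $\Psi$ always falls into its first alternative, and $V_0$ has been chosen exactly so that $\Psi\asymp M^{1+\varepsilon}\,V_0^A$. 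The remaining hypotheses of Theorem~\ref{moments-from-large-values}, namely $1\ll V_0\ll\Delta\,M^\varepsilon$ and $V_0\gg\Delta^{2/3}\,M^{-1/3}$, reduce respectively to $\Delta\gg M^{1/5}$ (our standing hypothesis) and to an inequality valid for all $\Delta\leq M$. Finally one must check $\Phi\ll M^{1+\varepsilon}\,V_0^A$; since with $\left\langle\alpha,\beta,\gamma\right\rangle=\left\langle0,1,\varepsilon\right\rangle$ and $A\geq4$ one has $\Phi=\Delta^{A-2}\,M^{1+\varepsilon}$, this is the inequality $\Delta^{5A-22}\ll M^A$, which is vacuous for $A\leq22/5$ and otherwise reads $\Delta\ll M^{A/(5A-22)}$; one verifies $A/(5A-22)\geq4/9>7/24$ for $22/5<A\leq8$ and $A/(5A-22)\geq4/9-11/(9A)$ for $8\leq A\leq11$ (equality at $A=11$), so the stated ranges of $\Delta$ more than suffice.

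The main obstacle, such as it is, is purely computational: carefully verifying the two-variable inequalities in $M$ and $\Delta$ that assert $\Phi\ll M^{1+\varepsilon}\,V_0^A$ and $\Psi\ll M^{1+\varepsilon}\,V_0^A$ along with the hypotheses of Theorem~\ref{moments-from-large-values}, and checking that the two regimes agree at the thresholds. The only genuine choices are $V_0=\Delta^{6/11}\,M^{1/11}$, which is forced by balancing $\Psi$ against the leading term, and the exponent pair $\left\langle1/2,1/2\right\rangle=B\left\langle0,1\right\rangle$, which is exactly the one for which $2q/p+3+3/p=11$ and therefore needs no special attention at the upper endpoint $A=11$.
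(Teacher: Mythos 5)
Your proposal is correct and follows essentially the same route as the paper: reduce to long-sum moments for large $\Delta$ (the paper does this implicitly by quoting Theorem~\ref{esa_moments}, you make the reduction $\left|E(x+\Delta)-E(x)\right|^A\ll\left|E(x+\Delta)\right|^A+\left|E(x)\right|^A$ explicit), and apply Theorem~\ref{moments-from-large-values} with $\left\langle p,q\right\rangle=\left\langle1/2,1/2\right\rangle$, $k=1$, $V_0=\Delta^{6/11}M^{1/11}$ for small $\Delta$. Two minor remarks: you avoid the paper's secondary case split at $\Delta\asymp M^{1/3}$ by observing that the ranges in the first and third cases already force $\Delta\ll M^{1/3}$, so feeding only the trivial input $\left\langle\alpha,\beta,\gamma\right\rangle=\left\langle0,1,\varepsilon\right\rangle$ into Theorem~\ref{moments-from-large-values} suffices --- a small but genuine simplification; on the other hand, the pointwise estimate $E(y)\ll y^{1/3+\varepsilon}$ you invoke for the $A\geq8$ large-$\Delta$ case is Jutila's bound (the $k=1$ case of the Corollary on p.~30 of \cite{Jutila1987a}), not a consequence of Deligne's estimate, which on its own gives nothing better than the trivial $\ll y^{1/2}$.
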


\section{Some useful theorems, lemmas and corollaries}

\subsection{The truncated Voronoi identity for cusp forms}

As is to be expected, the proofs use a truncated Voronoi type identity for cusp forms. The following is contained in Theorem 1.1 in \cite{Jutila1987a}.
\begin{theorem}\label{truncated-voronoi-identity-for-sums}
Let $x\in\left[1,\infty\right[$ and $N\in\mathbb R_+$ with $1\ll N\ll x$, and let $h$ and $k$ be coprime integers such that $1\leq k\leq x$. Then
\begin{multline*}
\sum_{n\leq x}a(n)\,e\!\left(n\,\frac{h}{k}\right)=\frac{k^{1/2}\,x^{1/4}}{\pi\,\sqrt2}\sum_{n\leq N}a(n)\,e\!\left(-n\,\frac{\bar{h}}{k}\right)n^{-3/4}\,\cos\!\left(4\pi\,\frac{\sqrt{nx}}k-\frac\pi4\right)\\+O(k\,x^{1/2+\varepsilon}\,N^{-1/2}).\end{multline*}
\end{theorem}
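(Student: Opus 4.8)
The plan is to derive the statement as a truncated form of the additively twisted Voronoi summation formula for $F$, whose source is the functional equation of the additively twisted Dirichlet series $L_{h/k}(s)=\sum_{n=1}^\infty a(n)\,e(nh/k)\,n^{-s}$, which converges absolutely for $\Re s>1$ by Deligne's bound. First I would record, using the modularity of $F$ under a matrix of $\mathrm{SL}(2,\mathbb Z)$ carrying $h/k$ to the cusp at infinity, that $L_{h/k}$ extends to an entire function and satisfies a functional equation relating $L_{h/k}(s)$ to $L_{-\overline h/k}(1-s)$ with an explicit gamma factor (essentially $\Gamma_{\mathbb C}(1-s+\tfrac{\kappa-1}{2})/\Gamma_{\mathbb C}(s+\tfrac{\kappa-1}{2})$) and a factor $k^{1-2s}$. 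Mellin inversion then gives the smooth Voronoi identity
\[\sum_{n=1}^\infty a(n)\,e(nh/k)\,g(n)=\frac{2\pi}{k}\sum_{n=1}^\infty a(n)\,e(-n\overline h/k)\int_0^\infty g(\xi)\,J_{\kappa-1}\!\left(\frac{4\pi\sqrt{n\xi}}{k}\right)\mathrm d\xi\]
for $g\in C_c^\infty(\mathbb R_+)$, up to an explicit unimodular constant (depending on $\kappa$ and on $h/k$) which I suppress.

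Second, I would reduce the sharp partial sum to a smooth one. Fix a transition length $U\asymp k\,x^{1/2}\,N^{-1/2}$ (in the complementary range $k^2>xN$, where this would exceed $x$, the claimed error already dominates the sum, so there is nothing to prove) and a smooth $u\colon\mathbb R\to\left[0,1\right]$ with $u\equiv1$ on $\left[U,x\right]$, $\operatorname{supp}u\subseteq\left]0,x+U\right[$, and $u^{(j)}\ll_j U^{-j}$. Since $a(n)\ll n^\varepsilon$, the integers in the two transition windows contribute $\ll U\,x^\varepsilon$, so that
\[\sum_{n\leq x}a(n)\,e(nh/k)=\sum_{n=1}^\infty a(n)\,e(nh/k)\,u(n)+O(U\,x^\varepsilon),\]
and applying the smooth Voronoi identity reduces matters to evaluating the Bessel integrals $I_n=\int_0^\infty u(\xi)\,J_{\kappa-1}(4\pi\sqrt{n\xi}/k)\,\mathrm d\xi$. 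After the substitution $y=4\pi\sqrt{n\xi}/k$ and one integration by parts using $y\,J_{\kappa-1}(y)=\tfrac{\mathrm d}{\mathrm dy}\bigl(y\,J_\kappa(y)\bigr)+(\kappa-1)\,J_\kappa(y)$ together with $\int_0^\infty J_\kappa=1$, the integral $I_n$ splits into three parts: a part supported on the upper transition window, where inserting the asymptotic $J_\kappa(y)=\sqrt{2/(\pi y)}\cos(y-\tfrac{\kappa\pi}2-\tfrac\pi4)+O(y^{-3/2})$ near $y=4\pi\sqrt{nx}/k$ and using that $\kappa$ is even produces, after summation over $n$, exactly the main term $\tfrac{k^{1/2}x^{1/4}}{\pi\sqrt2}\sum_n a(n)\,e(-n\overline h/k)\,n^{-3/4}\cos(4\pi\sqrt{nx}/k-\tfrac\pi4)$; a constant part, whose sum over $n$ is, up to lower-order terms, the number $L_{h/k}(0)$, which by the functional equation and Wilton's uniform estimate $\sum_{n\leq M}a(n)\,e(n\alpha)\ll M^{1/2}$ with partial summation is $\ll k$, hence $\ll k\,x^{1/2+\varepsilon}N^{-1/2}$ since $N\ll x$ and so absorbed into the error; and a collection of lower-order terms coming from the secondary Bessel asymptotics and from the smoothing of the cut-off.

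The main obstacle is controlling the tail and the transition of the dual sum. With the chosen $U$, the integral $I_n$ is of full size $\asymp k^{3/2}x^{1/4}n^{-3/4}$ for $n\ll N$ and, by the smoothness of $u$, decays faster than any power of $n/N$ once $n\gg N$, with the transition concentrated at $n\asymp N$; replacing this smooth cut-off by the sharp cut-off at $N$ therefore costs a discrepancy of shape $k^{1/2}x^{1/4}\sum_{n\asymp N}a(n)\,e(-n\overline h/k)\,n^{-3/4}\cos(4\pi\sqrt{nx}/k-\tfrac\pi4)$ against a bounded smooth weight, and the lower-order terms must likewise be summed over $n\leq N$. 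The trivial bounds --- via $a(n)\ll n^\varepsilon$, via Rankin--Selberg, or even via partial summation against $e(\pm2\sqrt{nx}/k)$ using Wilton's bound --- are not strong enough here once $N$ is a power of $x$ close to $x$ itself: one has to exploit the cancellation in the exponential sums $\sum_{n\leq M}a(n)\,e\bigl(-n\overline h/k\pm2\sqrt{nx}/k\bigr)$ arising from the stationary point of the phase $\pm2\sqrt{nx}/k$, i.e.\ apply the Voronoi identity (equivalently van der Corput's $B$-process) a second time --- precisely the transformation mechanism of \cite{Jutila1987a} --- and combine this with the Rankin--Selberg estimate to control $\left|a(n)\right|$ on average. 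Assembling the main term, the absorbed constant $L_{h/k}(0)\ll k$, the smoothing error $O(U\,x^\varepsilon)=O(k\,x^{1/2+\varepsilon}N^{-1/2})$, and these cancellation bounds for the tail then yields the stated truncated identity.
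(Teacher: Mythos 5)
The paper does not actually prove this statement: Theorem~\ref{truncated-voronoi-identity-for-sums} is taken as a citation to Theorem~1.1 of Jutila's Tata lecture notes, and the only ``proof content'' in the paper is the brief remark that the regime $N\in\left[c,1\right[$ is trivial because the left-hand side is $\ll x^{1/2}$ while the right-hand side then reduces to $O(kx^{1/2+\varepsilon})$. So you are being compared not against a proof in the paper but against Jutila's original argument, which proceeds via Perron's formula for the additively twisted $L$-function, a contour shift across the critical strip, the functional equation, and a saddle-point analysis of the resulting gamma-factor integrals; the sharp truncation at $n\leq N$ then emerges directly from the truncation of the Mellin contour, so no un-smoothing step is needed.

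Your sketch instead smooths the sharp cut-off, applies the $J_{\kappa-1}$-kernel Voronoi formula, and tries to un-smooth on the dual side. The set-up is fine (the choice $U\asymp kx^{1/2}N^{-1/2}$ gives exactly the claimed error from the transition windows, your dismissal of the range $k^2>xN$ is correct, and the extraction of the leading cosine from $J_\kappa$ reproduces the constant $\frac{k^{1/2}x^{1/4}}{\pi\sqrt2}$), and you have correctly located the genuine difficulty: the $n\asymp N$ transition of the dual sum does not succumb to trivial estimates once $N^3\gg k^2x$. But the way you propose to close this -- ``apply Voronoi a second time and combine with Rankin--Selberg'' -- is not carried out, and it is far from clear that it closes at all: applying Voronoi to $\sum_{n\asymp N}a(n)\,e(-n\overline h/k\pm2\sqrt{nx}/k)\,W(n)$ sends you back, via the stationary point $\sqrt n\mapsto\sqrt x$, to a short sum in $m$ concentrated near $m\asymp x$ of length comparable to the original smoothing window $U$, i.e.\ essentially the primal-side smoothing error you started from, and a naive count of the terms there gives a bound that \emph{fails} to be $\ll kx^{1/2+\varepsilon}N^{-1/2}$ precisely in the non-trivial range $k^2\leq xN$. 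A second, smaller issue is the identification of the ``constant part'' of the Bessel integral with $L_{h/k}(0)$: what falls out directly is $(\kappa-1)\frac{k}{4\pi}L_{-\overline h/k}(1)$, and while this does equal a multiple of $L_{h/k}(0)$ via the functional equation and is indeed $\ll k$, the step needs to be stated that way rather than asserted. The net effect is that the decisive cancellation estimate is asserted rather than proved, and I do not see how to make the ``second Voronoi'' remedy non-circular without replacing your whole approach by Jutila's contour-integral one (or by some other genuinely new input).
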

Strictly speaking, Theorem 1.1 in \cite{Jutila1987a} assumes that $N\geq1$ instead of $N\gg1$. However, if $N\in\left[c,1\right[$, where $c\in\left]0,1\right[$ is fixed, then the identity still holds as stated, for the left-hand side is $\ll x^{1/2}$ by the Wilton--Jutila estimate, and the right-hand side reduces to the $O$-term $O(k\,x^{1/2+\varepsilon})$.

\subsection{Spacing of square roots}

The truncated Voronoi identity leads to exponential sums involving cusp form coefficients with square root phase factors. When expanding a fourth power of such sums we obtain summation over quadruples $\left\langle a,b,c,d\right\rangle$. Individual terms will have phase factors involving $\sqrt{\vphantom ba}+\sqrt b-\sqrt{\vphantom bc}-\sqrt d$, and so we will need a result on the spacing of square roots. The following is contained in Theorem 2 of \cite{Robert--Sargos2006}.
\begin{theorem}\label{robert-sargos}
Let $\omega\in\left]1,\infty\right[$ be fixed, let $\delta\in\mathbb R_+$ and let $L\geq2$ be an integer. Then the number of quadruples $\left\langle a,b,c,d\right\rangle$ of integers with $a,b,c,d\in\left]L,2L\right]$, and
\[\bigl|a^{1/\omega}+b^{1/\omega}-c^{1/\omega}-d^{1/\omega}\bigr|<\delta\,L^{1/\omega},\]
is
\[\ll \delta\,L^{4+\varepsilon}+L^{2+\varepsilon}.\]
\end{theorem}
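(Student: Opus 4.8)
Since the excerpt attributes the statement to Theorem~2 of \cite{Robert--Sargos2006}, one would of course simply quote that result; but a direct argument would run as follows. First I would discard the trivial regime $\delta\gg1$, in which $\delta\,L^{1/\omega}$ already exceeds the diameter of $\left\{a^{1/\omega}+b^{1/\omega}:a,b\in\left]L,2L\right]\right\}$ and all $\ll L^4$ quadruples are counted, so assume $\delta$ small. By the symmetry of the inequality under $a\leftrightarrow b$, under $c\leftrightarrow d$, and under swapping $\left\langle a,b\right\rangle$ with $\left\langle c,d\right\rangle$, I would reduce to configurations with $a\geq c\geq d$ and $a\geq b$; the quadruples with two coinciding indices contribute only $\ll\delta\,L^3+L^2$, so I may take the four indices distinct. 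Writing $c=a-r$ and $d=b+s$ with $1\leq r,s<L$, the condition becomes
\[\bigl|\bigl(a^{1/\omega}-(a-r)^{1/\omega}\bigr)-\bigl((b+s)^{1/\omega}-b^{1/\omega}\bigr)\bigr|<\delta\,L^{1/\omega},\]
and the two bracketed increments are $\asymp r\,L^{1/\omega-1}$ and $\asymp s\,L^{1/\omega-1}$; comparing sizes, either $r,s\ll\delta\,L$ — in which case $\left\langle a,b\right\rangle$ is essentially unconstrained and the contribution is $\ll\delta\,L^4+L^2$ — or else $r\asymp s$.

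In the main range $r\asymp s$ I would separate the subcase $r=s$. There $(b+s)^{1/\omega}-b^{1/\omega}$ is the value at $b+r$ of the same strictly monotone function $x\mapsto x^{1/\omega}-(x-r)^{1/\omega}$ (derivative in the argument $\asymp r\,L^{1/\omega-2}$) whose value at $a$ stands on the left, so $a$ is confined to an interval of length $\ll\delta\,L^2/r$ about $b+r$, and $\sum_{r}L\,(\delta\,L^2/r+1)\ll\delta\,L^{3+\varepsilon}+L^2$. If $r\neq s$ and $r\ll\delta\,L^2$, i.e.\ the window $\delta\,L^{1/\omega}$ is at least the local spacing $\asymp r\,L^{1/\omega-2}$ of the increments, then for each $\left\langle r,s\right\rangle$ there are $\ll\delta\,L^3/r$ admissible $\left\langle a,b\right\rangle$, and summing over the $\asymp r$ values of $s$ and then over $r$ gives $\ll\delta\,L^4$.

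This leaves $r\neq s$ with the window strictly below the spacing, and this is the genuinely hard case — the only place the integrality of $a,b,c,d$ really enters. A naive count (at most one $b$ per $a$, hence $\ll L$ quadruples for each of the $\asymp L^2$ pairs $\left\langle r,s\right\rangle$) loses, giving only $\ll L^3$. What is actually needed is a bound, uniform in $r,s$, for the number of lattice points $\left\langle a,b\right\rangle\in\left]L,2L\right]^2$ lying within distance $\ll\delta\,L^2/r<1$ of the level curve $a^{1/\omega}-(a-r)^{1/\omega}=(b+s)^{1/\omega}-b^{1/\omega}$ — a curve one checks is genuinely convex away from at most one point — together with a way of combining these over all levels $\left\langle r,s\right\rangle$ simultaneously so as to reach $L^{2+\varepsilon}$ rather than the exponent $8/3$ (or worse) that a curve-by-curve application of the usual integer-points-near-a-curve estimate produces. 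Pulling the exponent down to $2$ here is the main obstacle, and is exactly the content of the counting argument of Robert and Sargos; for it I would refer to their proof of Theorem~2 in \cite{Robert--Sargos2006}.
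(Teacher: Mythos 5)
Your proposal is in substance the same as what the paper does: Theorem~\ref{robert-sargos} is not proved in the paper at all but simply quoted from Theorem~2 of \cite{Robert--Sargos2006}, and you likewise rest on that citation, correctly identifying that your elementary reductions handle only the easy regimes while the decisive $L^{2+\varepsilon}$ bound in the case where the window falls below the local spacing is exactly the content of Robert and Sargos's counting argument. So the proposal is fine, and the added sketch, while not required, accurately delimits what the citation is actually needed for.
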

\noindent
We shall actually use the special case $\omega=2$ and $\delta=k\, M^{\varepsilon-1/2}\,L^{-1/2}$ for some $M\in\left[1,\infty\right[$ and $k\in\mathbb Z_+$ with $L\ll M$. It is convenient to observe that in fact real values $L\in\left[2,\infty\right[$ are admissible, for if $L$ is not an integer, then we may apply Theorem \ref{robert-sargos} with $\left\lfloor L\right\rfloor$ and $\left\lceil L\right\rceil$. The quadruples not covered by these two cases must feature $\left\lfloor L\right\rfloor+1$ and at least one of the two numbers $2\left\lceil L\right\rceil-1$ and $2\left\lceil L\right\rceil$, so that there are at most $\ll L^2$ such quadruples. Finally, when $L$ is smaller, say $L\in\left[1/2,2\right]$, then the number of quadruples is certainly $\ll L^4\ll1\ll L^2$. Thus we have access to the following corollary.
\begin{corollary}\label{spacing-corollary}
Let $M\in\left[1,\infty\right[$, $L\in\left[1/2,\infty\right[$, $\vartheta\in\mathbb R_+$ and $k\in\mathbb Z_+$. Then the number of quadruples $\left\langle a,b,c,d\right\rangle$ of integers with $a,b,c,d\in\left]L,2L\right]$ and
\[\bigl|\sqrt{\vphantom ba}+\sqrt b-\sqrt{\vphantom bc}-\sqrt d\bigr|<k\,M^{\vartheta-1/2}\]
is
\[\ll L^{7/2+\varepsilon}\,k\,M^{\vartheta-1/2}+L^{2+\varepsilon}.\]
\end{corollary}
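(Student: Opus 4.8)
The plan is to obtain this as a direct specialization of Theorem~\ref{robert-sargos}, with the choice $\omega=2$, so that $a^{1/\omega}=\sqrt a$ and the inequality there becomes $\bigl|\sqrt a+\sqrt b-\sqrt c-\sqrt d\bigr|<\delta\,\sqrt L$. To make this threshold agree with the condition $\bigl|\sqrt a+\sqrt b-\sqrt c-\sqrt d\bigr|<k\,M^{\vartheta-1/2}$ appearing in the corollary, I would take $\delta=k\,M^{\vartheta-1/2}\,L^{-1/2}$, which is a positive real as Theorem~\ref{robert-sargos} requires. Substituting this value of $\delta$ into the bound $\ll\delta\,L^{4+\varepsilon}+L^{2+\varepsilon}$ gives
\[\delta\,L^{4+\varepsilon}+L^{2+\varepsilon}=k\,M^{\vartheta-1/2}\,L^{7/2+\varepsilon}+L^{2+\varepsilon},\]
which is precisely the asserted estimate.

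The only point needing care is that Theorem~\ref{robert-sargos} is stated for integer $L\geq2$, whereas the corollary allows an arbitrary real $L\in\left[1/2,\infty\right[$; this is handled exactly by the remark recorded immediately before the corollary's statement. First I would dispose of the range $L\in\left[1/2,2\right[$, where the number of integer quadruples $\left\langle a,b,c,d\right\rangle$ with all coordinates in $\left]L,2L\right]$ is trivially at most $\ll L^4\ll1\ll L^{2+\varepsilon}$. For a non-integer real $L\geq2$, I would apply the integer case of Theorem~\ref{robert-sargos} twice, once with $\left\lfloor L\right\rfloor$ and once with $\left\lceil L\right\rceil$ in place of $L$; the quadruples not covered by these two applications are forced to feature the value $\left\lfloor L\right\rfloor+1$ in one coordinate together with one of $2\left\lceil L\right\rceil-1$, $2\left\lceil L\right\rceil$ in another, and there are only $\ll L^2$ of these, which is absorbed into the $L^{2+\varepsilon}$ term.

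There is no genuinely hard step here: the corollary is just the $\omega=2$ instance of Robert and Sargos' theorem after the substitution for $\delta$, together with the routine reduction from real to integer $L$. The substantive work --- the actual counting of quadruples with nearly equal sums of square roots --- is entirely contained in Theorem~\ref{robert-sargos} itself, which we are permitted to assume.
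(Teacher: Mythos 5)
Your argument matches the paper's own derivation exactly: specialize Theorem~\ref{robert-sargos} with $\omega=2$ and $\delta=k\,M^{\vartheta-1/2}\,L^{-1/2}$, then pass from integer $L\geq2$ to real $L\geq1/2$ by the floor/ceiling device and the trivial bound on $\left[1/2,2\right[$. Nothing to add.
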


\subsection{Plain exponential sums}

Our large value estimate depends on estimating certain plain exponential sums. We will do so by employing the machinery of exponent pairs. If $\left\langle p,q\right\rangle\in\left[0,1/2\right]\times\left[1/2,1\right]$ is known to be an exponent pair, then
\[\sum_{M\leq n\leq M+\Delta}e(A\sqrt n)\ll A^p\,M^{q-p/2}+A^{-1}\,M^{1/2},\]
for $M\in\left[1,\infty\right[$, $\Delta\in\left[1,M\right]$, and $A\in\mathbb R_+$. A good reference for the theory of exponent pairs is \cite{Graham--Kolesnik1991}.

We also need the following result which allows us to separate Fourier coefficients from the exponential sums. It is a lemma of Bombieri, and appears as Lemma 1.5 in \cite{Montgomery1971}.
\begin{theorem}
Let $H$ be a complex Hilbert space with inner product $\left\langle\cdot\middle|\cdot\right\rangle$ and norm $\left\|\cdot\right\|$. Also, let $\xi,\varphi_1,\varphi_2,\dots,\varphi_R\in H$, where $R\in\mathbb Z_+$. Then
\[\sum_{r=1}^R\left|\left\langle\xi\middle|\varphi_r\right\rangle\right|^2
\leq\left\|\xi\right\|^2\max_{1\leq r\leq R}\sum_{s=1}^R\left|\left\langle\varphi_r\middle|\varphi_s\right\rangle\right|.\]
\end{theorem}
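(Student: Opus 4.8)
The plan is to follow the classical duality-plus-Cauchy--Schwarz argument. Write $T=\sum_{r=1}^R\left|\left\langle\xi\middle|\varphi_r\right\rangle\right|^2$ and $U=\max_{1\leq r\leq R}\sum_{s=1}^R\left|\left\langle\varphi_r\middle|\varphi_s\right\rangle\right|$; we may assume $T>0$, since otherwise the claimed inequality is trivial. First I would choose, for each $r\in\left\{1,\dots,R\right\}$, a unimodular complex number $c_r$ so that $c_r\left\langle\varphi_r\middle|\xi\right\rangle=\left|\left\langle\xi\middle|\varphi_r\right\rangle\right|$ (any choice of $c_r$ works when $\left\langle\xi\middle|\varphi_r\right\rangle=0$), and set
\[\eta=\sum_{r=1}^R\left|\left\langle\xi\middle|\varphi_r\right\rangle\right|c_r\,\varphi_r\in H.\]
By construction the inner product $\left\langle\eta\middle|\xi\right\rangle$ unwinds to exactly $T$, a nonnegative real number.

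Next I would apply the Cauchy--Schwarz inequality to $\left\langle\eta\middle|\xi\right\rangle$, obtaining $T^2=\left|\left\langle\eta\middle|\xi\right\rangle\right|^2\leq\left\|\xi\right\|^2\left\|\eta\right\|^2$. So it remains to show $\left\|\eta\right\|^2\leq T\,U$. Expanding the norm and using $\left|c_r\right|=1$ gives
\[\left\|\eta\right\|^2=\sum_{r,s}\left|\left\langle\xi\middle|\varphi_r\right\rangle\right|\left|\left\langle\xi\middle|\varphi_s\right\rangle\right|c_r\overline{c_s}\left\langle\varphi_r\middle|\varphi_s\right\rangle\leq\sum_{r,s}\left|\left\langle\xi\middle|\varphi_r\right\rangle\right|\left|\left\langle\xi\middle|\varphi_s\right\rangle\right|\left|\left\langle\varphi_r\middle|\varphi_s\right\rangle\right|.\]

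Then I would symmetrize: since $\left|\left\langle\varphi_r\middle|\varphi_s\right\rangle\right|=\left|\left\langle\varphi_s\middle|\varphi_r\right\rangle\right|$, the elementary inequality $uv\leq\tfrac12(u^2+v^2)$ with $u=\left|\left\langle\xi\middle|\varphi_r\right\rangle\right|$ and $v=\left|\left\langle\xi\middle|\varphi_s\right\rangle\right|$, together with relabelling $r\leftrightarrow s$, turns the last double sum into
\[\sum_{r,s}\left|\left\langle\xi\middle|\varphi_r\right\rangle\right|^2\left|\left\langle\varphi_r\middle|\varphi_s\right\rangle\right|=\sum_r\left|\left\langle\xi\middle|\varphi_r\right\rangle\right|^2\sum_s\left|\left\langle\varphi_r\middle|\varphi_s\right\rangle\right|\leq T\,U.\]
Combining this with the Cauchy--Schwarz bound yields $T^2\leq\left\|\xi\right\|^2\,T\,U$, and dividing through by $T$ gives $T\leq\left\|\xi\right\|^2\,U$, which is the assertion. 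There is no real obstacle here; the only step that requires a little care is the choice of the phases $c_r$ so that $\left\langle\eta\middle|\xi\right\rangle$ collapses to $T$, after which the proof is just Cauchy--Schwarz followed by the AM--GM symmetrization. (Conventions about which slot of $\left\langle\cdot\middle|\cdot\right\rangle$ is conjugate-linear only change whether one writes $c_r$ or $\overline{c_r}$ above, and do not affect the argument.)
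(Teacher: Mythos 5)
Your argument is correct: it is the standard duality proof of Bombieri's inequality (define $\eta=\sum_r\left|\left\langle\xi\middle|\varphi_r\right\rangle\right|c_r\,\varphi_r$, apply Cauchy--Schwarz to $\left\langle\eta\middle|\xi\right\rangle$, expand $\left\|\eta\right\|^2$, and symmetrize with $uv\leq\tfrac12\left(u^2+v^2\right)$), and your closing remark correctly disposes of the only delicate point, namely which of $c_r$ or $\overline{c_r}$ appears under the paper's convention that the first slot is conjugate-linear. Note that the paper itself offers no proof of this statement: it simply quotes it as Bombieri's lemma, Lemma 1.5 of Montgomery's book, so your write-up supplies exactly the classical argument behind that citation.
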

\noindent We shall apply this theorem with $H=\mathbb C^N$ for some $N\in\mathbb Z_+$ with the usual inner product and norm, which for vectors $z=\left\langle z_1,\ldots,z_N\right\rangle,w=\left\langle w_1,\ldots,w_N\right\rangle\in\mathbb C^N$ are given by
\[\left\langle z\middle|w\right\rangle=\sum_{\ell=1}^N\overline{z_\ell}\,w_\ell
\quad\text{and}\quad\left\|z\right\|^2=\sum_{\ell=1}^N\left|z_\ell\right|^2.\]

\subsection{Exponential integrals}

We will need a lemma for estimating exponential integrals. The following is Lemma 6 in \cite{Jutila--Motohashi2005}.
\begin{lemma}\label{jutila-motohashi-lemma}
Let $a,b\in\mathbb R_+$ and $a<b$, let $g\in C_{\mathrm c}^\infty(\mathbb R_+)$ with $\mathrm{supp}\,g\subseteq\left[a,b\right]$, and let $G_0,G_1\in\mathbb R_+$ be such that
\[g^{(\nu)}(x)\ll_\nu G_0\,G_1^{-\nu}\]
for all $x\in\mathbb R_+$ for each nonnegative integer $\nu$. Also, let $f$ be a holomorphic function defined in $D\subset\mathbb C$, which consists of all points in the complex plane with distance smaller than $\rho\in\mathbb R_+$ from the interval $\left[a,b\right]$ of the real axis. Assume that $f$ is real-valued on $\left[a,b\right]$ and let $F_1\in\mathbb R_+$ be such that
\[\left|f'(z)\right|\gg F_1\]
for all $z\in D$. Then, for all positive integers $P$,
\[\int\limits_a^bg(x)\,e(f(x))\,\mathrm dx\ll_PG_0\left(G_1\,F_1\right)^{-P}\left(1+\frac{G_1}\rho\right)^P\left(b-a\right).\]
\end{lemma}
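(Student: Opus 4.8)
\emph{Proof plan.} The plan is to prove the bound by the classical non-stationary phase argument, i.e.\ by integrating by parts $P$ times, the crucial point being that holomorphy of $f$ lets us control all the derivatives of $1/f'$ from the single lower bound $\left|f'\right|\gg F_1$. First I would observe that, since $\mathrm{supp}\,g\subseteq\left[a,b\right]$ and $g$ is smooth, $g$ and all its derivatives vanish at $a$ and $b$. Since $\left|f'(z)\right|\gg F_1>0$ throughout $D$, the function $u:=1/(2\pi i f')$ is holomorphic on $D$ and satisfies $\left|u\right|\ll F_1^{-1}$ there. As the open disc of radius $\rho$ about any $x\in\left[a,b\right]$ lies in $D$, Cauchy's integral formula yields
\[\bigl|u^{(j)}(x)\bigr|\ll_j F_1^{-1}\,\rho^{-j}\qquad\text{for all }x\in[a,b]\text{ and }j\geq0.\]
This is the only place holomorphy is used.

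Next, writing $e(f(x))=u(x)\,\bigl(e(f(x))\bigr)'$ and setting $L\phi:=\bigl(u\,\phi\bigr)'$, repeated integration by parts, with all boundary terms vanishing because $g$ and its derivatives vanish at $a$ and $b$, gives
\[\int\limits_a^b g(x)\,e(f(x))\,\mathrm dx=(-1)^P\int\limits_a^b\bigl(L^P g\bigr)(x)\,e(f(x))\,\mathrm dx,\]
so the integral in question is $\leq(b-a)\sup_{[a,b]}\bigl|L^P g\bigr|$. The core of the argument is then the estimation of $L^P g$. Set $R:=\min(G_1,\rho)$. I would show by induction on $m$ that $\bigl|(L^m g)^{(j)}(x)\bigr|\ll_{m,j}G_0\,(F_1 R)^{-m}\,R^{-j}$ on $[a,b]$: the case $m=0$ is immediate since $\bigl|g^{(j)}\bigr|\ll_j G_0\,G_1^{-j}\leq G_0\,R^{-j}$, and for the step one uses $L^{m+1}g=(u\,L^m g)'$ together with Leibniz's rule, expanding $(L^{m+1}g)^{(j)}$ into a finite sum of terms $u^{(l)}\,(L^m g)^{(j+1-l)}$ and inserting the Cauchy bound for $u^{(l)}$ (together with $\rho\geq R$) and the inductive bound for $(L^m g)^{(j+1-l)}$. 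In particular $\sup_{[a,b]}\bigl|L^P g\bigr|\ll_P G_0\,(F_1 R)^{-P}$.

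Finally I would put the pieces together: the integral is $\ll_P G_0\,(F_1 R)^{-P}\,(b-a)$, and since $R^{-1}=\max(G_1^{-1},\rho^{-1})\leq G_1^{-1}\bigl(1+G_1/\rho\bigr)$ we have $(F_1 R)^{-P}\leq(G_1 F_1)^{-P}\bigl(1+G_1/\rho\bigr)^P$, which is precisely the asserted estimate. The one part that needs genuine care is the bookkeeping in the induction: identifying $\min(G_1,\rho)$ as the right combined length scale and checking that each application of $L$ costs a factor $(F_1 R)^{-1}$ in amplitude while never degrading the length scale below $R$. Everything else is routine.
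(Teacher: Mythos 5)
Your argument is correct. Note, however, that the paper does not prove this lemma at all: it is quoted verbatim as Lemma 6 of Jutila and Motohashi (Acta Math.\ 195 (2005)), so there is no in-paper proof to compare against; what you have supplied is a self-contained proof of the cited result, and it is essentially the standard non-stationary-phase argument that also underlies the original: repeated integration by parts with the operator $L\phi=(u\phi)'$, $u=1/(2\pi i f')$, all boundary terms vanishing because every term of $L^m g$ carries a factor $g^{(j)}$ and $\mathrm{supp}\,g\subseteq[a,b]$, together with Cauchy estimates on discs of radius comparable to $\rho$ to convert the single lower bound $\left|f'\right|\gg F_1$ on $D$ into $\bigl|u^{(j)}\bigr|\ll_j F_1^{-1}\rho^{-j}$ on $[a,b]$ (one should take the circle radius to be a fixed fraction of $\rho$, the extra constant being absorbed into $\ll_j$). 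Your bookkeeping with the combined scale $R=\min(G_1,\rho)$ is the right way to organize the induction, and the final passage from $(F_1R)^{-P}$ to $(G_1F_1)^{-P}\left(1+G_1/\rho\right)^P$ via $R^{-1}\leq G_1^{-1}\left(1+G_1/\rho\right)$ recovers exactly the stated estimate. What the citation buys the paper is brevity; what your proof buys is transparency about where holomorphy and the width $\rho$ actually enter, which is also consistent with the remark following the lemma that in the applications one may take $\rho\asymp a\asymp G_1$ so that the factor $\left(1+G_1/\rho\right)^P$ is harmless.
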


We remark that, when $f$ is holomorphic in $\left\{z\in\mathbb C\middle|\Re z>0\right\}$, we may choose $\rho$ so that $\rho\asymp a$. In particular, in our applications of the lemma, we have $a\asymp b\asymp G_1$ and the factor $\left(1+G_1/\rho\right)^P$ is always~$\ll_P1$.

In the proof of the fourth moment estimate, we will introduce to our integrals a smooth weight function $w$. For definiteness, we define it here:
\begin{definition}\label{weight-function}
In the following, $w$ will denote a function in $C_{\mathrm c}^\infty(\mathbb R_+)$, depending on $M\in\left[1,\infty\right[$, taking values only from the interval $\left[0,1\right]$, and satisfying $\mathrm{supp}\,w\subseteq\left[M/2,5M/2\right]$, $w\equiv1$ on $\left[M,2M\right]$, and
\[w^{(\nu)}(x)\ll_\nu M^{-\nu}\]
for all $x\in\mathbb R_+$, for every $\nu\in\mathbb Z_+\cup\left\{0\right\}$.
\end{definition}

The following lemma will be used to estimate several exponential integrals:
\begin{lemma}\label{sinitulo}
Let $n,k\in\mathbb Z_+$, let $\Delta\in\mathbb R_+$, assume that $n\ll k^2\,M\,\Delta^{-2}$, and write
\[
S(n)=\sin\left(2\pi\,\frac{\sqrt{n}}{k}\left(\sqrt{x+\Delta}-\sqrt{x}\right)\right).
\]
Also, let $M$ and $w(x)$ be as in Definition~\ref{weight-function}.
Then
\[
\frac{\partial^{\nu}}{\partial x^{\nu}}\left(w(x)\,S(a)\,S(b)\,S(c)\,S(d)\right)\ll_\nu \frac{(abcd)^{1/2}\,\Delta^4}{k^4\,M^{2+\nu}}
\]
for all $a,b,c,d\in\mathbb Z_+$ with $\max(a,b,c,d)\ll k^2\,M\,\Delta^{-2}$ and $x\in\mathbb R_+$, for every $\nu\in\mathbb Z_+\cup\left\{0\right\}$.
\end{lemma}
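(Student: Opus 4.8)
The plan is to estimate each factor $S(\cdot)$ and its derivatives separately, and then assemble the bound for the product $w(x)\,S(a)\,S(b)\,S(c)\,S(d)$ by the Leibniz rule. Fix one of the arguments, say $a\ll k^2\,M\,\Delta^{-2}$, and write $\varphi(x)=2\pi\,\frac{\sqrt a}{k}\left(\sqrt{x+\Delta}-\sqrt x\right)$, so that $S(a)=\sin\varphi(x)$. The key observation is that, for $x\asymp M$, we have $\sqrt{x+\Delta}-\sqrt x\asymp\Delta\,M^{-1/2}$, whence $\varphi(x)\ll\frac{\sqrt a}{k}\,\Delta\,M^{-1/2}\ll1$ by the hypothesis on $a$. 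Since $\left|\sin t\right|\leq\left|t\right|$, this already gives the crucial \emph{smallness} $S(a)\ll\frac{\sqrt a}{k}\,\Delta\,M^{-1/2}=a^{1/2}\,\Delta\,k^{-1}\,M^{-1/2}$, which is the source of the factor $(abcd)^{1/2}\,\Delta^4\,k^{-4}\,M^{-2}$ in the final bound once all four factors are multiplied together.

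Next I would control the derivatives of $\varphi$ and then of $S(a)=\sin\varphi$. A direct computation gives $\varphi'(x)=\pi\,\frac{\sqrt a}{k}\left((x+\Delta)^{-1/2}-x^{-1/2}\right)$, and more generally each $x$-derivative of $\sqrt{x+\Delta}-\sqrt x$ is $\ll\Delta\,M^{-1/2-\nu}$ on the support of $w$ (the difference of the two Taylor-type terms saves one factor of $\Delta\,M^{-1}$ relative to differentiating a single square root). Hence $\varphi^{(\nu)}(x)\ll\frac{\sqrt a}{k}\,\Delta\,M^{-1/2-\nu}$ for every $\nu\geq1$, which we may rewrite as $\varphi^{(\nu)}(x)\ll\Psi\,M^{-\nu}$ with $\Psi:=\frac{\sqrt a}{k}\,\Delta\,M^{-1/2}\ll1$. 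By Fa\`a di Bruno applied to $\sin\varphi$, each derivative $\frac{\mathrm d^\nu}{\mathrm dx^\nu}\sin\varphi(x)$ is a sum of products of the form $(\sin$ or $\cos)(\varphi)\cdot\prod_j\varphi^{(\nu_j)}$ with $\sum_j\nu_j=\nu$ and each $\nu_j\geq1$; since $\Psi\ll1$, the dominant contribution is the single-factor term, giving $\frac{\mathrm d^\nu}{\mathrm dx^\nu}S(a)\ll\Psi\,M^{-\nu}$ for $\nu\geq0$ as well (for $\nu=0$ this is just $S(a)\ll\Psi$). In short, $S(a)$ behaves, together with all its derivatives, like a smooth function of size $\Psi=a^{1/2}\,\Delta\,k^{-1}\,M^{-1/2}$ that loses $M^{-1}$ per differentiation.

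Finally, I would combine this with the properties of $w$ from Definition~\ref{weight-function}, namely $w^{(\nu)}(x)\ll_\nu M^{-\nu}$, via the Leibniz rule for the five-fold product $w\cdot S(a)\cdot S(b)\cdot S(c)\cdot S(d)$. Every term in the expansion of $\frac{\partial^\nu}{\partial x^\nu}\bigl(w\,S(a)\,S(b)\,S(c)\,S(d)\bigr)$ is a product of one derivative of $w$ and one derivative of each $S(\cdot)$, with the orders summing to $\nu$; bounding each such factor by the estimates above yields
\[
\frac{\partial^\nu}{\partial x^\nu}\bigl(w(x)\,S(a)\,S(b)\,S(c)\,S(d)\bigr)
\ll_\nu M^{-\nu}\cdot\frac{a^{1/2}\,b^{1/2}\,c^{1/2}\,d^{1/2}\,\Delta^4}{k^4\,M^2}
=\frac{(abcd)^{1/2}\,\Delta^4}{k^4\,M^{2+\nu}},
\]
since the $M^{-\nu}$ from the total number of differentiations and the four copies of $M^{-1/2}$ from the sizes of the $S(\cdot)$'s are exactly what appear, and the number of Leibniz terms depends only on $\nu$. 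I expect the only genuinely delicate point to be the derivative bound $\varphi^{(\nu)}(x)\ll\frac{\sqrt a}{k}\,\Delta\,M^{-1/2-\nu}$: one must check that differentiating the \emph{difference} $\sqrt{x+\Delta}-\sqrt x$ really does retain the extra $\Delta\,M^{-1/2}$ smallness for all $\nu$, rather than behaving like a single $\sqrt x$ (which would only give $M^{1/2-\nu}$); this follows either from the integral representation $\sqrt{x+\Delta}-\sqrt x=\frac12\int_0^\Delta (x+t)^{-1/2}\,\mathrm dt$, after which differentiation under the integral sign and the bound $(x+t)^{-1/2-\nu}\ll M^{-1/2-\nu}$ for $x\asymp M$, $0\le t\le\Delta\le M$ give the claim immediately, or from a direct Taylor expansion. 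Everything else is routine bookkeeping with the chain and product rules.
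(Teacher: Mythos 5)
Your argument is correct and follows essentially the same route as the paper: bound each $S(\cdot)$ and its derivatives by $\ll_\nu \sqrt{n}\,\Delta\,k^{-1}\,x^{-(2\nu+1)/2}$ on the support of $w$, then combine with $w^{(\nu)}\ll_\nu M^{-\nu}$ via the Leibniz rule. The only difference is that you spell out (via Fa\`a di Bruno and the integral representation of $\sqrt{x+\Delta}-\sqrt x$, using $\sqrt n\,\Delta\,k^{-1}M^{-1/2}\ll1$ from the hypothesis $n\ll k^2 M\Delta^{-2}$) the derivative bound that the paper simply asserts, which is a harmless and indeed welcome addition.
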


\begin{proof} Since $w(x)$ vanishes outside the interval $\left[M/2,5M/2\right]$, it is enough to consider the case $x\in\left[M/2,5M/2\right]$. Notice first that we have
\begin{multline*}
\frac{\partial^{\nu}}{\partial x^{\nu}}\left(w(x)\,S(a)\,S(b)\,S(c)\,S(d)\right)
=\sum_{\substack{\alpha_1+\alpha_2+\alpha_3\\+\alpha_4+\alpha_5=\nu}}\frac{\nu!}{\alpha_1!\,\alpha_2!\,\alpha_3!\,\alpha_4!\,\alpha_5!}\\
\cdot\left(\frac{\partial^{\alpha_1}}{\partial x^{\alpha_1}}\,S(a)\right)\left(\frac{\partial^{\alpha_2}}{\partial x^{\alpha_2}}\,S(b)\right)\left(\frac{\partial^{\alpha_3}}{\partial x^{\alpha_3}}\,S(c)\right)\left(\frac{\partial^{\alpha_4}}{\partial x^{\alpha_4}}\,S(d)\right)\left(\frac{\partial^{\alpha_5}}{\partial x^{\alpha_5}}\,w(x)\right),
\end{multline*}
where the summation is over quintuples $\left\langle\alpha_1,\alpha_2,\alpha_3,\alpha_4,\alpha_5\right\rangle$ of nonnegative integers satisfying $\alpha_1+\alpha_2+\alpha_3+\alpha_4+\alpha_5=\nu$.
Now
\[
S(n)=\sin\left(2\pi \frac{\sqrt{n}}{k}(\sqrt{x+\Delta}-\sqrt{x})\right)\ll \frac{\sqrt{n}}{k}(\sqrt{x+\Delta}-\sqrt{x})\ll\frac{\sqrt{n}\,\Delta}{k\,\sqrt{x}},
\]
and when $\alpha\in\mathbb Z_+\cup\left\{0\right\}$, we have
\[
\frac{\partial^{\alpha}}{\partial x^{\alpha}}\,S(n)
\ll_\alpha\frac{\sqrt{n}\,\Delta}{k\,x^{(2\alpha+1)/2}}.
\]
Putting everything together, we obtain
\begin{multline*}
\frac{\partial^{\nu}}{\partial x^{\nu}}\left(w(x)\,S(a)\,S(b)\,S(c)\,S(d)\right)
=\sum_{\substack{\alpha_1+\alpha_2+\alpha_3\\+\alpha_4+\alpha_5=\nu}}
\frac{\nu!}{\alpha_1!\,\alpha_2!\,\alpha_3!\,\alpha_4!\,\alpha_5!}\\
\cdot\left(\frac{\partial^{\alpha_1}}{\partial x^{\alpha_1}}\,S(a)\right)\left(\frac{\partial^{\alpha_2}}{\partial x^{\alpha_2}}\,S(b)\right)\left(\frac{\partial^{\alpha_3}}{\partial x^{\alpha_3}}\,S(c)\right)\left(\frac{\partial^{\alpha_4}}{\partial x^{\alpha_4}}\,S(d)\right)\left(\frac{\partial^{\alpha_5}}{\partial x^{\alpha_5}}\,w(x)\right)\\
\ll_{\nu} \sum_{\substack{\alpha_1+\alpha_2+\alpha_3\\+\alpha_4+\alpha_5=\nu}}
\frac{\sqrt{\vphantom ba}\,\Delta}{k\,x^{(2\alpha_1+1)/2}}
\cdot\frac{\sqrt{b}\,\Delta}{k\,x^{(2\alpha_2+1)/2}}
\cdot\frac{\sqrt{\vphantom bc}\,\Delta}{k\,x^{(2\alpha_3+1)/2}}
\cdot\frac{\sqrt{d}\,\Delta}{k\,x^{(2\alpha_4+1)/2}}\cdot x^{-\alpha_5}\\
\ll_{\nu}\frac{(abcd)^{1/2}\,\Delta^4}{k^4\,M^{2+\nu}}.
\end{multline*}
\end{proof}

Before embarking on the proofs of Theorems \ref{fourth-moment-of-short-exponential-sums}, \ref{large-values} and \ref{moments-from-large-values}, we introduce one final lemma on the mean square of the kind of exponential sums which arise from the truncated Voronoi identity.
\begin{lemma}\label{second-moment-pienet-termit}
Let $M\in\left[1,\infty\right[$, $L\in\left[1/2,\infty\right[$ and $T\in\left[0,\infty\right[$ with $T\ll M$
and let $h\in\mathbb Z$ and $k\in\mathbb Z_+$ be coprime.
Furthermore, let $w(x)$ be a smooth weight function as in Definition \ref{weight-function}.
Then we have
\begin{multline*}
\int\limits_{M/2}^{5M/2}w(x)\left|\sum_{L<n\leq2L}a(n)\,n^{-3/4}\,e\!\left(-n\,\frac{\overline h}k\right)e\!\left(\pm\frac{2\sqrt{n(x+T)}}k\right)\right|^2\mathrm dx\\
\ll M\,L^{-1/2}+L^\varepsilon\,k\,M^{1/2+\varepsilon},
\end{multline*}
and if we further assume that $L\ll M^{1-\vartheta}\,k^{-2}$ for some fixed positive real number $\vartheta$ that can be chosen to be arbitrarily small, then we have
\begin{multline*}\int\limits_{M/2}^{5M/2}w(x)\left|\sum_{L<n\leq2L}a(n)\,n^{-3/4}\,e\!\left(-n\,\frac{\overline h}k\right)e\!\left(\pm\frac{2\sqrt{n(x+T)}}k\right)\right|^2\mathrm dx\\
\ll M\,L^{-1/2}.
\end{multline*}
\end{lemma}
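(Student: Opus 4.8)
The plan is to expand the square, integrate term by term, and control the diagonal and off-diagonal contributions using Deligne's bound and the spacing of square roots. Writing $S$ for the inner sum, we have
\[
\int_{M/2}^{5M/2}w(x)\,\lvert S\rvert^2\,\mathrm dx
=\sum_{L<m,n\leq2L}a(m)\,\overline{a(n)}\,(mn)^{-3/4}\,e\!\left((n-m)\,\frac{\overline h}{k}\right)
\int_{M/2}^{5M/2}w(x)\,e\!\left(\pm\frac{2\bigl(\sqrt{m(x+T)}-\sqrt{n(x+T)}\bigr)}{k}\right)\mathrm dx.
\]
For the diagonal terms $m=n$ the exponential integral is $\ll M$, there are $\ll L$ of them, each coefficient factor is $\ll L^{-3/2+\varepsilon}$ by Deligne, so the diagonal contributes $\ll M\,L^{-1/2+\varepsilon}$; absorbing the $L^\varepsilon$ into $M^\varepsilon$ (or noting $L\ll M$) this is the claimed $M\,L^{-1/2}$ term. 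The work is entirely in the off-diagonal $m\neq n$.

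For the off-diagonal, I would apply Lemma~\ref{jutila-motohashi-lemma} to the exponential integral. The phase is $f(x)=\pm\frac{2}{k}\bigl(\sqrt{m(x+T)}-\sqrt{n(x+T)}\bigr)$, which is holomorphic on $\Re z>-T$ and satisfies $\lvert f'(x)\rvert\asymp \frac{1}{k}\,\bigl\lvert\sqrt m-\sqrt n\bigr\rvert\,M^{-1/2}$ on $[M/2,5M/2]$, while $G_0\asymp1$, $G_1\asymp M$, and we may take $\rho\asymp M+T\asymp M$ so the factor $(1+G_1/\rho)^P\ll_P1$. Hence the integral is
\[
\ll_P M\left(\frac{M^{1/2}\,k}{M\,\lvert\sqrt m-\sqrt n\rvert}\right)^{P}
=M\left(\frac{k}{M^{1/2}\,\lvert\sqrt m-\sqrt n\rvert}\right)^{P}.
\]
When $\lvert\sqrt m-\sqrt n\rvert\geq k\,M^{\varepsilon-1/2}$ this is $\ll_A M^{-A}$ for any $A$, so (using $a(m)\,\overline{a(n)}\,(mn)^{-3/4}\ll L^{-3/2+\varepsilon}$ and that there are $\ll L^2$ pairs) the total contribution of such pairs is negligible. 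It therefore remains to bound the contribution of the "close" pairs with $0<\lvert\sqrt m-\sqrt n\rvert<k\,M^{\varepsilon-1/2}$: estimating the integral trivially by $\ll M$, each coefficient product by $\ll L^{-3/2+\varepsilon}$, the contribution is $\ll M\,L^{-3/2+\varepsilon}\,\#\{(m,n)\}$, and by Corollary~\ref{spacing-corollary} (in the degenerate "pair" version, or directly: the number of such pairs is $\ll L^{3/2+\varepsilon}\,k\,M^{\varepsilon-1/2}+L^{1+\varepsilon}$, which one gets from Robert--Sargos with $c=d$ fixed, or more simply by counting $m$ in a window of length $\ll L^{1/2}\,k\,M^{\varepsilon-1/2}$ around each $n$) this is
\[
\ll M\,L^{-3/2+\varepsilon}\bigl(L^{3/2}\,k\,M^{\varepsilon-1/2}+L\bigr)
\ll L^\varepsilon\,k\,M^{1/2+\varepsilon}+M\,L^{-1/2+\varepsilon}.
\]
Combining with the diagonal gives the first assertion.

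For the second assertion, under the extra hypothesis $L\ll M^{1-\vartheta}\,k^{-2}$ the "close pair" count simplifies: since $L^{1/2}\,k\,M^{-1/2}\ll M^{-\vartheta/2}\ll1$ (up to the $M^\varepsilon$, which we absorb by choosing $\varepsilon<\vartheta/2$), a window of length $\ll L^{1/2}\,k\,M^{\varepsilon-1/2}<1$ around each integer $n$ contains only $m=n$ itself, so there are in fact \emph{no} off-diagonal close pairs, and the $L^\varepsilon\,k\,M^{1/2+\varepsilon}$ term disappears entirely, leaving only the diagonal $\ll M\,L^{-1/2}$. The one point requiring a little care is the interaction of the $M^\varepsilon$ from Deligne's bound with the gain $M^{-\vartheta/2}$: one fixes $\varepsilon$ small enough in terms of $\vartheta$ at the outset, which is legitimate since $\vartheta$ is fixed. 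The main obstacle is thus organizational rather than deep — correctly applying the Jutila--Motohashi lemma with the right choice of $\rho$ and verifying the derivative lower bound on $f$, then feeding the square-root spacing bound in at the appropriate threshold.
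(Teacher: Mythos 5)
Your proof takes essentially the same route as the paper: expand the square, split into diagonal and off-diagonal, estimate the off-diagonal exponential integrals via Lemma~\ref{jutila-motohashi-lemma} with $\rho\asymp M$, and classify pairs by the threshold $\lvert\sqrt m-\sqrt n\rvert\lessgtr k\,M^{\varepsilon-1/2}$; the second assertion is then the observation that the width $L^{1/2}\,k\,M^{\varepsilon-1/2}$ of the ``close'' window is $o(1)$ under the hypothesis $L\ll M^{1-\vartheta}\,k^{-2}$, so no off-diagonal close pairs survive. The overall structure is correct and matches the paper.

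There is one small but genuine flaw in the diagonal term. You invoke Deligne to get $\lvert a(n)\rvert^2\,n^{-3/2}\ll L^{-3/2+\varepsilon}$, giving the diagonal $\ll M\,L^{-1/2+\varepsilon}$, and then claim you can ``absorb the $L^\varepsilon$.'' In the second assertion of the lemma the right-hand side is $M\,L^{-1/2}$ with no $\varepsilon$ at all, so there is nothing to absorb it into, and since $L$ may be as large as (a constant times) $M^{1-\vartheta}\,k^{-2}$, the extra $L^\varepsilon$ is a real loss of a power of $M$. The paper avoids this by estimating the diagonal with the Rankin--Selberg bound $\sum_{L<n\leq2L}\lvert a(n)\rvert^2\ll L$ rather than Deligne pointwise, giving the clean $\sum_{L<n\leq2L}\lvert a(n)\rvert^2\,n^{-3/2}\ll L^{-1/2}$ and hence diagonal contribution $\ll M\,L^{-1/2}$ exactly. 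With this substitution your argument is complete. Two minor further remarks: the $+L^{1+\varepsilon}$ you add to the close-pair count is harmless but unnecessary (the direct window count already gives $\ll L^{3/2}\,k\,M^{\varepsilon-1/2}$ off-diagonal close pairs, or none at all), and Corollary~\ref{spacing-corollary} is stated for quadruples so you are right to fall back on the elementary counting argument rather than citing it for pairs.
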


\begin{proof}
Here we expand the square as $\left|\Sigma\right|^2=\Sigma\,\overline\Sigma$ and separate the diagonal terms from the off-diagonal terms, leading to
\begin{align*}
&\int\limits_{M/2}^{5M/2}w(x)\left|\sum_{L<n\leq2L}a(n)\,n^{-3/4}\,e\!\left(-n\,\frac{\overline h}k\right)e\!\left(\pm\frac{2\sqrt{n(x+T)}}k\right)\right|^2\mathrm dx\\
&\ll\sum_{L<n\leq2L}\frac{\left|a(n)\right|^2}{n^{3/2}}\int\limits_{M/2}^{5M/2}w(x)\,\mathrm dx\\
&\qquad+\sum_{L<m<n\leq2L}\frac{\left|a(m)\,a(n)\right|}{(mn)^{3/4}}
\left|\int\limits_{M/2}^{5M/2}w(x)\,e\!\left(\pm\frac{2\left(\sqrt m-\sqrt n\right)\sqrt{x+T}}k\right)\mathrm dx\right|.
\end{align*}
The diagonal terms contribute $\ll M\,L^{-1/2}$, and by Lemma \ref{jutila-motohashi-lemma}, the off-diagonal terms contribute, for arbitrary $P\in\mathbb Z_+$,
\[\ll_P\sum_{L<m<n\leq2L}\frac{\left|a(m)\,a(n)\right|}{(mn)^{3/4}}\cdot\left(\frac{k\,M^{-1/2}}{\sqrt n-\sqrt m}\right)^P\cdot M.\]
When ${k}\,{M^{-1/2}\,|\sqrt{n}-\sqrt{m}|^{-1}}\ll M^{-\varepsilon'}$, for some constant $\varepsilon'\in\mathbb R_+$, the bound above can be made as small as desired by choosing $P$ to be sufficiently large (depending on $\varepsilon'$). Let us now choose $\varepsilon'\in\left]0,{\vartheta}/{2}\right[$. The condition
\[
\frac{k}{M^{1/2}\,|\sqrt{n}-\sqrt{m}|}\ll M^{-\varepsilon'}
\]
holds when $|n-m|\gg k\,M^{-1/2+\varepsilon'}\sqrt{L}$. Therefore, when $L\ll M^{1-\vartheta}\,k^{-2}$, we have
\[
k\,M^{-1/2+\varepsilon'}\sqrt{L}\ll k^{1-1}\,M^{-1/2+\varepsilon'+1/2-\vartheta/2}=o(1),
\]
and hence, when $m\ne n$ and so $\left|m-n\right|\geqslant1$, we have
\[
\frac{k}{M^{1/2}\,|\sqrt{n}-\sqrt{m}|}\ll M^{-\varepsilon'},
\]
and thus, in particular, only the contribution from diagonal terms counts when $L\ll M^{1-\vartheta}\,k^{-2}$.

Let us now estimate the contribution coming from the off-diagonal terms for which $\left|n-m\right|\ll k\,M^{-1/2+\varepsilon'}\,\sqrt L$, when $L\gg M^{1-\vartheta}\,k^{-2}$. For each value of $n$, there are $\ll \sqrt{L}\,k\,M^{-1/2+\varepsilon'}$ values of $m$, and for all of these values, we estimate the integral by absolute values. We thus obtain from the off-diagonal terms
\[
\ll \sum_{L< n\leq 2L}L^{\varepsilon-3/2}\,\sqrt{L}\,k\,M^{-1/2+\varepsilon}M\ll L^\varepsilon\,k\,M^{1/2+\varepsilon'}.
\]
\end{proof}

\subsection{Moments of long linear sums}

The following is Theorem 2.3 from \cite{Vesalainen2014}.

\begin{theorem}\label{esa_moments}
Let $M\in\left[1,\infty\right[$,
let us fix an exponent pair $\left\langle p,q\right\rangle\in\left]0,1/2\right]\times\left[1/2,1\right]$ satisfying
$q\geq(p+1)/2$, and let $h$ and $k$ be coprime integers with
$1\leq k\ll M^{1/2-\varepsilon}$.
Furthermore, let
$\alpha,\beta,\gamma,\delta,A\in\left[0,\infty\right[$ be fixed exponents so
that
\[\sum_{n\leq x}a(n)\,e\!\left(n\,\frac{h}k\right)\ll
k^\alpha\,x^{\beta+\varepsilon}\]
for $x\in\left[1,\infty\right[$ and for $k$ satisfying $x^\gamma\ll k\ll x^\delta$.
Then, for $M^\gamma\ll k\ll M^\delta$,
\[\int\limits_M^{2M}\left|\sum_{n\leq x}a(n)\,e\!\left(n\,\frac hk\right)\right|^A\,\mathrm dx
\ll k^{A/2}\,M^{A/4+1}+\Phi+\Psi,\]
where
\[\Phi=\left\{\begin{array}{ll}
k^{\alpha A+2(1-\alpha)}\,M^{\beta A+(1-2\beta)+\varepsilon}&\text{if
$A\geq2$},\\
k^{A/2+1}\,M^{A/4+1/2+\varepsilon}&\text{if $A\leq2$},
\end{array}\right.\]
and
\[\Psi=\left\{\begin{array}{ll}
k^{\alpha A-\alpha-\alpha/p+(1-\alpha)2q/p}\,M^{\beta
A+1-\beta-\beta/p+(1-2\beta)q/p+\varepsilon}&
\text{if $A\geq1+\left(1+2q\right)/p$,}\\[1mm]
k^{A/2-1/2-1/(2p)+q/p}\,M^{A/4+3/4-1/(4p)+q/(2p)+\varepsilon}&\text{if
$A\leq1+\left(1+2q\right)/p$.}
\end{array}\right.\]
\end{theorem}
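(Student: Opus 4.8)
The plan is to reduce the sum $E(x):=\sum_{n\le x}a(n)\,e(nh/k)$ to exponential sums by Voronoi summation and then run the standard distribution-function and large-values machinery (as for the moments of the error term in the divisor problem). First I would apply the truncated Voronoi identity of Theorem \ref{truncated-voronoi-identity-for-sums} with $N$ a sufficiently large power of $M$, so that the error term $O(k\,x^{1/2+\varepsilon}\,N^{-1/2})$ is $\ll1$ on $[M,2M]$ and hence harmless for the $A$-th moment. Splitting the sum over $n\le N$ dyadically into blocks $L<n\le2L$, of which there are only $\ll M^\varepsilon$, and using $|E|^A\ll M^\varepsilon\sum_L|E_L|^A$, it suffices to bound $\int_M^{2M}|E_L(x)|^A\,\mathrm dx$ for a single block, where
\[
E_L(x)=\frac{k^{1/2}\,x^{1/4}}{\pi\sqrt2}\sum_{L<n\le2L}a(n)\,e\!\left(-n\,\frac{\overline h}{k}\right)n^{-3/4}\,\cos\!\left(\frac{4\pi\sqrt{nx}}{k}-\frac\pi4\right).
\]
Writing the cosine as two exponentials and invoking Lemma \ref{second-moment-pienet-termit} with $T=0$ (and the weight $w$ of Definition \ref{weight-function}), the mean square satisfies $\int_M^{2M}|E_L|^2\,\mathrm dx\ll k\,M^{3/2+\varepsilon}\,L^{-1/2}+k^2\,M^{1+\varepsilon}$; moreover $|E_L(x)|\ll k^{1/2}\,M^{1/4}\,L^{1/4+\varepsilon}$ on $[M,2M]$ by Deligne's bound, and for $L$ beyond $\asymp k^2\,M^{1+\varepsilon}$ the $x$-derivative of the phase is so large that repeated integration by parts (Lemma \ref{jutila-motohashi-lemma}) makes the tail negligible.

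The core of the matter is a bound for $\mu_L(V):=\mathrm{meas}\{x\in[M,2M]:|E_L(x)|\ge V\}$. Since $E_L$ is constant on each interval $[m,m+1)$, $\mu_L(V)$ is comparable to the number $R$ of integers $x_1,\dots,x_R\in[M,2M]$ with $|E_L(x_r)|\ge V$, and I would bound $R$ in two ways. The mean-square route immediately gives $R\,V^2\ll k\,M^{3/2+\varepsilon}\,L^{-1/2}+k^2\,M^{1+\varepsilon}$. The large-values route passes to a maximal $\asymp V$-spaced subset of the $x_r$ and applies Bombieri's inequality (the Hilbert-space lemma above) in $\mathbb C^N$ with $N\asymp L$, $\xi=\bigl(a(n)\,n^{-3/4}\,e(-n\overline h/k)\bigr)_{L<n\le2L}$ and, after separating the two signs in the cosine, $\varphi_r=\bigl(e(\pm2\sqrt{nx_r}/k)\bigr)_{L<n\le2L}$: here $\|\xi\|^2\ll L^{-1/2+\varepsilon}$ by Rankin--Selberg, $\langle\xi\mid\varphi_r\rangle$ is a constant multiple of the unnormalised value $E_L(x_r)$ so that $\sum_r|\langle\xi\mid\varphi_r\rangle|^2\gg R\,V^2\,k^{-1}\,M^{-1/2}$, while the cross terms
\[
\langle\varphi_r\mid\varphi_s\rangle=\sum_{L<n\le2L}e\!\left(\pm\frac{2\bigl(\sqrt{x_r}-\sqrt{x_s}\bigr)\sqrt n}{k}\right)
\]
are plain exponential sums to be estimated by the fixed exponent pair $\langle p,q\rangle$ via $\sum_{L<n\le2L}e(A\sqrt n)\ll A^p\,L^{q-p/2}+A^{-1}\,L^{1/2}$. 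Summing the resulting geometric series over the $\asymp V$-spaced gaps $|x_r-x_s|$ produces the second bound on $R$, and the hypothesis $q\ge(p+1)/2$ is precisely what makes the exponent-pair term dominate in the regime where it should.

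I would then assemble the moment from the layer-cake identity
\[
\int\limits_M^{2M}|E_L(x)|^A\,\mathrm dx=A\int\limits_0^\infty V^{A-1}\,\mu_L(V)\,\mathrm dV,
\]
splitting the $V$-range at appropriate breakpoints and using in each piece the strongest available bound: the crude $\mu_L\le M$ for small $V$, which after balancing at $V\asymp k^{1/2}M^{1/4}$ gives the main term $k^{A/2}M^{A/4+1}$; the mean-square bound for intermediate $V$, which gives $\Phi$, with the split between its two cases occurring at $A=2$; and the Bombieri/exponent-pair bound for large $V$, which gives $\Psi$, with the split at $A=1+(1+2q)/p$. The pointwise hypothesis $\sum_{n\le x}a(n)\,e(nh/k)\ll k^\alpha\,x^{\beta+\varepsilon}$, which holds throughout $[M,2M]$, enters to cut the $V$-range from above and thereby to introduce the exponents $\alpha$ and $\beta$ into the top contributions to $\Phi$ and $\Psi$; the block bound $k^{1/2}M^{1/4}L^{1/4+\varepsilon}$ ensures that very small $L$ contributes only the main term and keeps the set of relevant $L$ of size $\ll M^\varepsilon$. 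Summing over these dyadic $L$ yields the stated estimate.

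I expect the main obstacle to be this final assembly together with the second large-values bound: one must combine Bombieri's inequality with the exponent-pair estimate cleanly, keep every $k$-dependence straight (the $k^{1/2}$ and $x^{1/4}$ from Voronoi, the factor $1/k$ inside the square-root phases, the $k$ in the Voronoi error term, and the allowed range $M^\gamma\ll k\ll M^\delta$), choose the $V$-breakpoints so that the endpoint contributions reassemble into exactly the stated $\Phi$ and $\Psi$ rather than into something weaker, and carry out the routine smoothing that moves between the sharp integral over $[M,2M]$ and the $w$-weighted integrals of the lemmas. Identifying which dyadic block $L$ is extremal in each $V$-regime, and verifying that the condition $q\ge(p+1)/2$ really does dispose of the secondary exponent-pair term wherever it must, are the other points requiring care.
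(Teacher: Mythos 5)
The paper does not actually prove this statement: it is quoted as Theorem 2.3 of \cite{Vesalainen2014}, so your attempt has to be measured against that proof and against the analogous short-sum arguments given here for Theorems \ref{large-values} and \ref{moments-from-large-values}. Your overall architecture --- truncated Voronoi identity, Bombieri's inequality combined with exponent pairs for a large-values estimate, then a dyadic/layer-cake assembly with the pointwise bound $k^\alpha M^{\beta+\varepsilon}$ capping the value range --- is the right one, but two of your steps would fail as described, and they are exactly the steps that produce the stated exponents. The first is the choice of $N$. Theorem \ref{truncated-voronoi-identity-for-sums} requires $N\ll x$, so you cannot take $N$ a large power of $M$ with error $\ll1$; more importantly, the proof must take $N$ as \emph{small} as possible and dependent on the value level $V$, namely $N\asymp k^2M^{1+\varepsilon}V^{-2}$, just large enough that the error $kM^{1/2+\varepsilon}N^{-1/2}\ll V$ is absorbed into the large value. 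The diagonal of Bombieri's inequality then contributes $R\ll kM^{1/2+\varepsilon}N^{1/2}V^{-2}\ll k^2M^{1+\varepsilon}V^{-3}$, and it is this $V^{-3}$ which, evaluated at the cap $V\asymp k^\alpha M^\beta$, gives $\Phi=k^{\alpha A+2(1-\alpha)}M^{\beta A+1-2\beta+\varepsilon}$. With your $V$-independent $N\asymp M$ the same diagonal gives only $R\ll kM^{1+\varepsilon}V^{-2}$, and your blockwise mean-square bound $kM^{3/2+\varepsilon}L^{-1/2}+k^2M^{1+\varepsilon}$ does no better once the admissible $L$ are cut by the sup bound; assembling the moment from these yields terms exceeding the stated $\Phi$ by factors of size $k^{\alpha-1}M^{\beta}$ or $M^{1/2}k^{-1}$, which are positive powers of $M$ in the ranges of interest. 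The high blocks $L\gg k^2MV^{-2}$ cannot be repaired afterwards; the proof simply never lets them appear. (A smaller slip in the same part: $E_L$ is not constant on intervals $[m,m+1)$ --- the step function is the original sum, not the Voronoi block --- so the passage from measure to counting must instead use maximal $V$-spaced representatives, as in the proof of Theorem \ref{moments-from-large-values}.)

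The second gap is in the off-diagonal treatment. After estimating $\langle\varphi_r|\varphi_s\rangle\ll\bigl(|x_r-x_s|\,k^{-1}M^{-1/2}\bigr)^p\,L^{q-p/2}+k\,M^{1/2}\,L^{1/2}\,|x_r-x_s|^{-1}$, only the second piece sums over the $V$-spaced points like a harmonic series; the exponent-pair piece gives $\sum_{s\neq r}|x_r-x_s|^p\asymp R^{1+p}V^p$, which is not a convergent ``geometric series'', and inserting it into Bombieri's inequality produces an inequality of the form $R\ll\cdots+C\,R^{1+p}(\cdots)$, i.e.\ a lower bound for $R$ rather than an upper bound. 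The actual argument (compare the proof of Theorem \ref{large-values}) first splits $[M,2M]$ into $\ll1+M/M_0$ windows of length $M_0$, bounds $\sum_s|x_r-x_s|^p\ll R_0\,M_0^p$ within a window, chooses $M_0$ so small that this term can be absorbed into the left-hand side, and only then multiplies the window bound by $1+M/M_0$; the factor $M/M_0$ is precisely the origin of $\Psi$, and the hypothesis $q\geq(p+1)/2$ enters to make the exponent $q-p/2-1/2$ of the block length nonnegative, so that the worst dyadic block is $L\asymp N$. Without the window device neither the second large-values bound nor $\Psi$ is obtained, so as written your plan proves a genuinely weaker statement than the one asserted.
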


\section{Proof of Theorem \ref{fourth-moment-of-short-exponential-sums}}\label{fourth-moment-proof-section}

We let $\varepsilon_0\in\mathbb R_+$ be arbitrary. Our goal is to prove an estimate $\ll M^{2+\varepsilon_0}\,k^2$ or $\ll M^{1+\varepsilon_0}\,\Delta^2$. Some exponents in the proof will depend on the desired final value of $\varepsilon_0$. We assume throughout the proof that $k\ll M^{-1/2}\,\Delta$ and $k\ll M^{1/4}$, or that $k\gg M^{-1/2}\,\Delta$ and $k\ll M^{-1/4}\,\Delta^{2/3}$.

We begin by applying the truncated Voronoi identity for cusp form coefficients  to get, for $N\in\mathbb R_+$ satisfying $1\ll N\ll M$,
\begin{align*}
&\int\limits_M^{2M}\,\left|\sum_{x\leq n\leq x+\Delta}a(n)\,e\!\left(n\,\frac{h}{k}\right)\right|^4\mathrm dx\\
&\ll\int\limits_M^{2M}\,\Biggl|k^{1/2}\sum_{n\leq N}a(n)\,n^{-3/4}\,e\!\left(-n\,\frac{\overline{h}}{k}\right)\Biggl((x+\Delta)^{1/4}\,\cos\biggl(4\pi\,\frac{\sqrt{n(x+\Delta)}}{k}-\frac\pi4\biggr)\Biggr.\Biggr.\\
&\hspace*{12em}\Biggl.\Biggl.-x^{1/4}\,\cos\!\left(4\pi\,\frac{\sqrt{nx}}{k}-\frac\pi4\right)\!\Biggr)\Biggr|^4\mathrm dx
+M^{3+\varepsilon}\,N^{-2}\,k^{4},
\end{align*}
where we applied the elementary inequality $\left|A+B\right|^4\ll\left|A\right|^4+\left|B\right|^4$, which holds uniformly for all $A,B\in\mathbb C$.

Let us now choose $N$ in the following way:
\[
N=\begin{cases} M^{1/2}\,k, & \textrm{when } k\ll \Delta\, M^{-1/2},\\ k^{2}\,M\,\Delta^{-1}, &\textrm{otherwise}.\end{cases}
\]
Thus, when $k\ll\Delta\,M^{-1/2}$, we have trivially $N\geq1$, and we have $N\ll M$ since $k\ll M^{1/2}$. When $k\gg\Delta\,M^{-1/2}$, we have again trivially $N\gg1$, and we have $N\ll M$ since $k\ll M^{-1/4}\,\Delta^{2/3}\ll\Delta^{2/3-1/4}=\Delta^{5/12}\ll\Delta^{1/2}$.

Hence the error from the error term of the truncated Voronoi identity becomes
\[
M^{3+\varepsilon}\,N^{-2}\,k^4\ll \begin{cases} M^{2+\varepsilon}\,k^2,& \textrm{when }k\ll \Delta \,M^{-1/2},\\ M^{1+\varepsilon}\,\Delta^2, & \textrm{otherwise.}\end{cases}
\]

Since the integrand is nonnegative, we may introduce the weight function $w(x)$ of Definition \ref{weight-function} to the integral involving the main terms from the truncated Voronoi identity, and extend the region of integration to be over the interval $\left[M/2,5M/2\right]$:
\[\int\limits_M^{2M}\left|\ldots\right|^4\mathrm dx
\ll\int\limits_{M/2}^{5M/2}w(x)\left|\ldots\right|^4\mathrm dx.\]
Next, we split the sum $\sum_n$ dyadically into $\sum_{L\leq N/2}\sum_{L<n\leq2L}$, where $L$ ranges over the values $N/2$, $N/4$, $N/8$, \dots\ There will be $\ll1+\log M$ such values of interest, and so we may continue the estimations by applying H\"older's inequality to get
\begin{multline*}
\ll \left(1+\log M\right)^3\sum_{\substack{L\leq N/2\\\text{dyadic}}}\int\limits_{M/2}^{5M/2}w(x)\\\cdot\left|k^{1/2}\sum_{L<n\leq2L}a(n)\,n^{-3/4}\,e\!\left(-n\,\frac{\overline h}k\right)\left((x+\Delta)^{1/4}\cos(\ldots)-x^{1/4}\cos(\ldots)\right)\right|^4\mathrm dx,
\end{multline*}
where of course $\left(1+\log M\right)^3\ll M^\varepsilon$.
The sum over $L$ is split into three parts: those terms with $L$ large, the terms with $L$ so small, that there is very little oscillation, but there is cancellation in the main terms of the truncated Voronoi identity, and the remaining terms in the middle:
\[\sum_{\substack{L\leq N/2\\\textrm{dyadic}}}=
\sum_{\substack{L\ll Y\\\textrm{dyadic}}}
+\sum_{\substack{Y\ll L\ll X\\\textrm{dyadic}}}
+\sum_{\substack{X\ll L\leq N/2\\\textrm{dyadic}}}.\]

When $k\ll\Delta\,M^{-1/2}$ we choose
\[Y=1\quad\text{and}\quad X=\sqrt M,\]
and certainly $1\ll Y\ll X\ll N$.
In particular, the first sum over $L\ll Y$ will be empty.

When $k\gg\Delta\,M^{-1/2}$, we choose
\[Y= k^2\,M\,\Delta^{-2}\quad\text{and}\quad X=\min\left(\Delta^4\,M^{-1}\,k^{-6},N\right).\]
We will trivially have $1\ll Y\ll N$. Also, we always have $Y\ll \Delta^4\,M^{-1}\,k^{-6}$ since this is equivalent with $k\ll M^{-1/4}\,\Delta^{3/4}$ and this holds since we have  $k\ll M^{-1/4}\,\Delta^{2/3}\ll M^{-1/4}\,\Delta^{3/4}$.
When $\Delta \gg M^{2/5}\,k^{8/5}$, we have $X\asymp N$ and the sum over $L\gg X$ will be empty. The rest of the proof consists of working through each of these cases separately.

\paragraph{The high-frequency terms $L\gg X$.} For these values of $L$, we may estimate by the truncated Voronoi identity that for any $x\in\left[M/2,5M/2\right]$ and each $T\in\left\{0,\Delta\right\}$,
\begin{align*}
&k^{1/2}\,(x+T)^{1/4}\sum_{L<n\leq2L}a(n)\,n^{-3/4}\,e\!\left(-n\,\frac{\overline h}k\right)\cos\!\left(4\pi\,\frac{\sqrt n}k\sqrt{x+T}-\frac\pi4\right)\\
&=k^{1/2}\left(x+T\right)^{1/4}\sum_{n\leq2L}\ldots-k^{1/2}\left(x+T\right)^{1/4}\sum_{n\leq L}\ldots\\
&=\pi\,\sqrt2\sum_{n\leq x+T}a(n)\,e\!\left(n\,\frac hk\right)+O(k\,M^{1/2+\varepsilon}\,L^{-1/2})\\
&\qquad-\pi\,\sqrt2\sum_{n\leq x+T}a(n)\,e\!\left(n\,\frac hk\right)-O(k\,M^{1/2+\varepsilon}\,L^{-1/2})
\ll k\,M^{1/2+\varepsilon}\,L^{-1/2},
\end{align*}
so that
\begin{multline*}
\left|k^{1/2}\sum_{L<n\leq2L}a(n)\,n^{-3/4}\,e\!\left(-n\,\frac{\overline h}k\right)\left((x+\Delta)^{1/4}\cos(\ldots)-x^{1/4}\cos(\ldots)\right)\right|^2\\
\ll k^2\,M^{1+\varepsilon}\,L^{-1}.
\end{multline*}
The contribution from the high-frequency terms involving $\sqrt{x+T}$, where $T\in\left\{0,\Delta\right\}$, can be estimated by
\begin{multline*}
\ll k^{3}\,M^{3/2+\varepsilon}\sum_\pm\sum_{\substack{X\ll L\leq N/2\\\text{dyadic}}}L^{-1}\\
\cdot\int\limits_{M/2}^{5M/2}w(x)\left|\sum_{L<n\leq2L}a(n)\,n^{-3/4}\,e\!\left(-n\,\frac{\overline h}k\right)e\!\left(\pm2\,\frac{\sqrt n}k\,\sqrt{x+T}\right)\right|^2\mathrm dx.
\end{multline*}
We may use  Lemma \ref{second-moment-pienet-termit} to bound the expression on the second line. The contribution coming from this is $\ll M\,L^{-1/2}+L^\varepsilon\,k\,M^{1/2+\varepsilon}$, and if $L\ll M^{1-\varepsilon_0/2}\,k^{-2}$, then the contribution is $\ll M\, L^{-1/2}$.

The contribution coming from the diagonal terms $M\,L^{-1/2}$ is
\begin{align*}
&\ll k^3\,M^{3/2+\varepsilon}\sum_{\substack{X\ll L\leq N/2\\ \text{dyadic}}}L^{-1}\,M\,L^{-1/2}
 \ll k^3\,M^{5/2+\varepsilon}\,X^{-3/2}.
\end{align*}
When $k\ll \Delta \,M^{-1/2}$, we have $X=M^{1/2}$, and hence the contribution will be
\[
\ll k^3\,M^{5/2+\varepsilon}\,M^{-3/4}=k^3\,M^{7/4+\varepsilon}\ll k^2\,M^{2+\varepsilon},
\]
since $k\ll M^{1/4}$. When $k\gg \Delta\, M^{-1/2}$, we have $X\asymp N$, in which case there are no high-frequency terms to consider, or $X=\Delta^4\,M^{-1}\,k^{-6}$. In the latter case we obtain
\begin{align*}
&\ll k^3\,M^{5/2+\varepsilon}\,X^{-3/2}\ll k^3\,M^{5/2+\varepsilon}(\Delta^4\,M^{-1}\,k^{-6})^{-3/2}\\
&\ll k^{12}\,M^{4+\varepsilon}\,\Delta^{-6}\ll M^{1+\varepsilon}\,\Delta^2,
\end{align*}
since $k\ll \Delta^{2/3}\,M^{-1/4}$.

Finally, let us compute the contribution of the term $L^\varepsilon\,k\,M^{1/2+\varepsilon}$. This term exists only for $L\gg M^{1-\varepsilon_0/2}\,k^{-2}$, and we thus obtain
\begin{align*}
&\ll k^3\,M^{3/2+\varepsilon}\sum_{\substack{M^{1-\varepsilon_0/2}\,k^{-2}\ll L\leq N/2\\ \textrm{dyadic}}}L^{\varepsilon-1}\,k\,M^{1/2+\varepsilon}\\
&\ll k^4\,M^{2+\varepsilon}\left(M^{1-\varepsilon_0/2}\,k^{-2}\right)^{\varepsilon-1}
\ll M^{1+\varepsilon_0/2+\varepsilon}\,k^6.
\end{align*}
In the case $k\ll\Delta\,M^{-1/2}$ this is $\ll k^2\,M^{2+\varepsilon_0}$ since $k\ll M^{1/4}$. In the case $k\gg\Delta\,M^{-1/2}$ this is $\ll M^{1+\varepsilon_0}\,\Delta^2$, provided that $k\ll\Delta^{1/3}$. But this holds since $M^{-1/2}\,\Delta\ll k\ll M^{-1/4}\,\Delta^{2/3}$, so that $\Delta\ll M^{3/4}$, and therefore $k\ll M^{-1/4}\,\Delta^{2/3}\ll \Delta^{-1/3}\,\Delta^{2/3}\ll\Delta^{1/3}$.

\paragraph{The low-frequency terms $L\ll Y$.}
Let us recall first that these terms need to be considered only in the case $k\gg\Delta\,M^{-1/2}$ in which $Y=k^{2}\,M\,\Delta^{-2}$.

For low-frequency terms we want to get the sums to partially cancel each other, and therefore, we want to replace the factor $(x+\Delta)^{1/4}$ by $x^{1/4}$:
\begin{multline*}
k^{1/2}\sum_{L< n\leq 2L}\frac{a(n)}{n^{3/4}}\,e\!\left(-n\,\frac{\overline{h}}{k}\right)\left((x+\Delta)^{1/4}-x^{1/4}\right)\cos\left(4\pi\,\frac{\sqrt{n(x+\Delta)}}{k}-\frac{\pi}{4}\right)\\
\ll  k^{1/2}\,L^{1/4}\,\Delta M^{-3/4}\ll k^{1/2}\left(k^2\,M\,\Delta^{-2}\right)^{1/4}\Delta \,M^{-3/4}
\ll k\,M^{-1/2}\,\Delta^{1/2}.
\end{multline*}
Hence the total contribution coming from replacing $\left(x+\Delta\right)^{1/4}$ by $x^{1/4}$ is
\[
\ll M^{1+\varepsilon}\left(k\,M^{-1/2}\,\Delta^{1/2}\right)^4
\ll k^4\,M^{-1+\varepsilon}\,\Delta^2\ll \Delta^2\,M^{1+\varepsilon},
\]
which holds since $k\ll M^{-1/4}\,\Delta^{2/3}\ll M^{2/3-1/4}=M^{5/12}\ll M^{1/2}$.
We may now use the elementary trigonometric identity
\[\cos\left(2\xi-\frac\pi4\right)-\cos\left(2\eta-\frac\pi4\right)
=2\sin\left(\xi-\eta\right)\cos\left(\xi+\eta+\frac\pi4\right),\]
which holds for any $\xi,\eta\in\mathbb R$.
Applying this with
\[\xi=2\pi\,\frac{\sqrt n}k\,\sqrt{x+\Delta}\quad\text{and}\quad
\eta=2\pi\,\frac{\sqrt n}k\,\sqrt{x},\]
the contribution from the terms with $L\ll Y$ is
\begin{align*}
&\ll k^2\,M^{1+\varepsilon}\sum_\pm\sum_{\substack{L\ll Y\\\text{dyadic}}}\\
&\quad\cdot
\int\limits_{M/2}^{5M/2}w(x)\left|\sum_{L<n\leq2L}\frac{a(n)}{n^{3/4}}\,e\!\left(-n\,\frac{\overline h}k\right)S(n)\,
e\!\left(\pm\frac{\sqrt n}{k}\,(\sqrt{x+\Delta}+\sqrt x\,)\right)\right|^4\mathrm dx\\
&\ll k^2\,M^{1+\varepsilon}\sum_{\substack{L\ll Y\\\text{dyadic}}}
\sum_{L<a\leq2L}\sum_{L<b\leq2L}\sum_{L<c\leq2L}\sum_{L<d\leq2L}\frac{\left|a(a)\,a(b)\,\overline{a(c)\,a(d)}\right|}{(abcd)^{3/4}}\\
&\qquad\cdot\left|\int\limits_{M/2}^{5M/2}w(x)\,S(a)\,S(b)\,S(c)\,S(d)\,e\!\left(\frac{\alpha}{k}\,(\sqrt{x+\Delta}+\sqrt x\,)\right)\mathrm dx\right|.
\end{align*}
where the factors $S(n)$ are given by
\[S(n)=\sin\left(2\pi\,\frac{\sqrt n}{k}\,(\sqrt{x+\Delta}-\sqrt x)\right),\]
the coefficient $\alpha$ is the square root expression
\[\alpha=\sqrt{\vphantom ba}+\sqrt b-\sqrt{\vphantom bc}-\sqrt d,\]
and $w$ is as in Definition \ref{weight-function}.

Let us first consider the terms of $\sum_a\sum_b\sum_c\sum_d$ with $\alpha\gg M^{\varepsilon_0/2-1/2}\,k$. Using Lemma \ref{sinitulo}, we have, for each $\nu\in\mathbb Z_+\cup\left\{0\right\}$,
\[\frac{\mathrm d^\nu}{\mathrm dx^\nu}\left(w(x)\,S(a)\,S(b)\,S(c)\,S(d)\right)\ll_\nu \frac{(abcd)^{1/2}\,\Delta^4}{k^4\,M^{2+\nu}}.\]
Therefore, in the terms under consideration, the integral $\int_{M/2}^{5M/2}\ldots\mathrm dx$ may be estimated using Lemma \ref{jutila-motohashi-lemma} to be, for any $P\in\mathbb Z_+$,
\begin{multline*}\ll_P \frac{(abcd)^{1/2}\,\Delta^4}{k^4\,M^2}\,\left(M\,\alpha\,M^{-1/2}\,k^{-1}\right)^{-P}\,M
\ll \frac{(abcd)^{1/2}\,\Delta^4}{k^4\,M^2}\,M^{1-P\varepsilon_0/2}\\ \ll (abcd)^{1/2}\,M^{1-P\varepsilon_0/2}.\end{multline*}
Fixing $P$ to be sufficiently large (depending on $\varepsilon_0$), the contribution from the terms under consideration will be
\begin{align*}
&\ll k^2\,M^{1+\varepsilon}\sum_{\substack{L\ll Y\\\text{dyadic}}}\sum_a\sum_b\sum_c\sum_d\frac{\left|a(a)\,a(b)\,a(c)\,a(d)\right|}{(abcd)^{1/4}}\,M^{1-P\varepsilon_0/2}\\
&\ll k^2\, M^{2+\varepsilon-P\varepsilon_0/2}\sum_{\substack{L\ll Y\\\text{dyadic}}}L^3
\ll M^{6+\varepsilon-P\varepsilon_0/2}\ll1.
\end{align*}

Finally, by Corollary \ref{spacing-corollary}, the number of terms in the sum $\sum_a\sum_b\sum_c\sum_d$ with $\alpha\ll k\,M^{\varepsilon_0/2-1/2}$ is
\[\ll L^{7/2+\varepsilon}\,k\,M^{\varepsilon_0/2-1/2}+L^{2+\varepsilon},\]
and so we conclude, estimating everything by absolute values, and sine factors by $\sin x\ll x$, that the rest of the low-frequency terms with $L\ll Y$ contribute
\begin{align*}
&\ll k^2\,M^{1+\varepsilon}\sum_{\substack{L\ll Y\\\text{dyadic}}}\left(L^{7/2+\varepsilon}\,k\,M^{\varepsilon_0/2-1/2}+L^{2+\varepsilon}\right)L^{\varepsilon-3}\,M\left(L^{1/2}\,\Delta\,M^{-1/2}\,k^{-1}\right)^4\\
&\ll k^2\,\Delta^4\,M^\varepsilon\sum_{\substack{L\ll Y\\\text{dyadic}}}\left(L^{5/2+\varepsilon}\,k^{-3}\,M^{\varepsilon_0/2-1/2}+L^{1+\varepsilon}\,k^{-4}\right)\\
&\ll\Delta^4\,M^\varepsilon\left(\left(k^{2}\,M\,\Delta^{-2}\right)^{5/2+\varepsilon}k^{-1}\,M^{\varepsilon_0/2-1/2}+\left(k^{2}\,M\,\Delta^{-2}\right)^{1+\varepsilon}k^{-2}\right)\\
&\ll\Delta^{-1}\,M^{2+\varepsilon_0/2+\varepsilon}\,k^4+\Delta^2\,M^{1+\varepsilon}\ll M^{1+\varepsilon_0}\,\Delta^2,
\end{align*}
since $k\ll \Delta^{2/3}\,M^{-1/4}\ll\Delta^{3/4}\,M^{-1/4}$.

\paragraph{The terms in the middle with $Y\ll L\ll X$.}
The contribution from the terms with $Y\ll L\ll X$ and involving $\sqrt{x+T}$, where $T\in\left\{0,\Delta\right\}$, is
\begin{align*}
&\ll k^2\,M^{1+\varepsilon}\sum_\pm\\
&\qquad\cdot\sum_{\substack{Y\ll L\ll X\\\text{dyadic}}}
\int\limits_{M/2}^{5M/2}w(x)\left|\sum_{L<n\leq2L}a(n)\,n^{-3/4}\,
e\!\left(-n\,\frac{\overline{h}}{k}\pm2\,\frac{\sqrt n}{k}\,\sqrt{x+T}\,\right)\right|^4\mathrm dx\\
&\ll k^2\,M^{1+\varepsilon}\sum_{\substack{Y\ll L\ll X\\\text{dyadic}}}
\sum_{L<a\leq2L}\sum_{L<b\leq2L}\sum_{L<c\leq2L}\sum_{L<d\leq2L}\frac{\bigl|a(a)\,a(b)\,\overline{a(c)\,a(d)}\bigr|}{(abcd)^{3/4}}\\
&\qquad\cdot\left|\int\limits_{M/2}^{5M/2}w(x)\,e\!\left(2\alpha\,\frac{\sqrt{x+T}}{k}\,\right)\mathrm dx\right|,
\end{align*}
where the coefficient $\alpha$ is again the square root expression
\[\alpha=\sqrt{\vphantom ba}+\sqrt b-\sqrt{\vphantom bc}-\sqrt d.\]

In those terms of $\sum_a\sum_b\sum_c\sum_d$ in which $\alpha\gg k\,M^{\varepsilon_0/2-1/2}$, we may estimate the integral $\int_{M/2}^{5M/2}$ by Lemma \ref{jutila-motohashi-lemma} for any $P\in\mathbb Z_+$ by
\[\ll_P(M\,\alpha\,k^{-1}\,M^{-1/2})^{-P}\,M\ll M^{1-P\varepsilon_0/2}.\]
Thus, these terms contribute, taking $P$ fixed and sufficiently large (depending on $\varepsilon_0$),
\begin{align*}
&\ll k^2\,M^{1+\varepsilon}\sum_{\substack{Y\ll L\ll X\\\text{dyadic}}}L^{1+\varepsilon}\,M^{1-P\varepsilon_0/2}\ll k^2\, M^{2+\varepsilon-P\varepsilon_0/2}\,X^{1+\varepsilon} \ll1.
\end{align*}
Finally, the number of terms in $\sum_a\sum_b\sum_c\sum_d$ in which $\alpha\ll k\,M^{\varepsilon_0/2-1/2}$ is by Corollary \ref{spacing-corollary}
\[\ll L^{7/2+\varepsilon}\,k\,M^{\varepsilon_0/2-1/2}+L^{2+\varepsilon},\]
and so the contribution from these terms, estimating by absolute values, is
\begin{align*}
&\ll k^2\,M^{1+\varepsilon}\sum_{\substack{Y\ll L\ll X\\\text{dyadic}}}\left(L^{7/2+\varepsilon}\,k\,M^{\varepsilon_0/2-1/2}+L^{2+\varepsilon}\right)L^{\varepsilon-3}\,M\\
&\ll k^2\,M^{2+\varepsilon}\sum_{\substack{Y\ll L\ll X\\\text{dyadic}}}\left(L^{1/2}\,k\,M^{\varepsilon_0/2-1/2}+L^{-1}\right).
\end{align*}
The contribution from the second term $L^{-1}$ is
\[
\ll\begin{cases} k^2\,M^{2+\varepsilon} & \textrm{if }\,k\ll\Delta\, M^{-1/2} \\ \Delta^2\,M^{1+\varepsilon} & \textrm{otherwise.}\end{cases}
\]
Let us now move to considering the first term. In the case $k\ll \Delta\, M^{-1/2},$ we have $X=M^{1/2}$, and thus obtain
\[
\ll k^3\,M^{3/2+\varepsilon_0/2+\varepsilon}\,X^{1/2}\ll k^3\,M^{3/2+\varepsilon_0}\,M^{1/4}\ll k^3\,M^{7/4+\varepsilon_0}\ll k^2\,M^{2+\varepsilon_0},
\]
since $k\ll M^{1/4}$.

In the case $k\gg \Delta\, M^{-1/2}$, we have $X\ll\Delta^4\,M^{-1}\,k^{-6}$, and hence, the contribution is
\[
\ll k^3\,M^{3/2+\varepsilon_0/2+\varepsilon}\,X^{1/2}\ll k^3\,M^{3/2+\varepsilon_0}(\Delta^4\,M^{-1}\,k^{-6})^{1/2}\ll \Delta^2\,M^{1+\varepsilon_0}.
\]

\section{Proof of Theorem \ref{large-values}}

We begin by observing that we may assume $M$ to be larger than a fixed large constant, because when $M\ll1$, we also have $k\asymp\Delta\asymp V\asymp1\asymp M$ and the desired estimate for $R$ reduces to $R\ll1$, which would hold as certainly $R\ll 1+ M/V\ll1$ in this case. Also, in the following all implicit constants are allowed to depend on $\delta$ and $\left\langle p,q\right\rangle$. We also make the simple observation that we may assume that $V\ll\sqrt M$ for the sums in question cannot obtain larger values by the Wilton--Jutila estimate.

Let $x\in\left[M,2M\right]$, and let $N\in\mathbb R_+$ with $1\ll N\ll M$. We will choose $N$ later.
The truncated Voronoi identity says that
\begin{align*}
&\sum_{x\leq n\leq x+\Delta}a(n)\,e\!\left(n\,\frac{h}k\right)\\
&=\frac{k^{1/2}}{\pi\,\sqrt2}\sum_{n\leq N}a(n)\,n^{-3/4}\,e\!\left(-n\,\frac{\overline h}k\right)\\
&\quad\quad\cdot
\left((x+\Delta)^{1/4}\cos\!\left(4\pi\,\frac{\sqrt{n(x+\Delta)}}k-\frac\pi4\right)
-x^{1/4}\cos\!\left(4\pi\,\frac{\sqrt{nx}}k-\frac\pi4\right)\right)\\
&\quad+O(k\,M^{1/2+\delta}\,N^{-1/2}).
\end{align*}
If $x$ happens to be an integer, then the term $a(x)\,e(xh/k)$ is certainly $\ll x^\delta$ by Deligne's estimate, and this is certainly $\ll k\,M^{1/2+\delta}\,N^{-1/2}$.
Replacing the factor $(x+\Delta)^{1/4}$ by $x^{1/4}$ causes the error
\[\ll k^{1/2}\sum_{n\leq N}\left|a(n)\right|n^{-3/4}\,\Delta\,M^{-3/4}
\ll k^{1/2}\,N^{1/4}\,\Delta\,M^{-3/4}.\]
Also, the difference of the cosines may be replaced by a sine integral:
\begin{multline*}
\cos\!\left(4\pi\,\frac{\sqrt{n(x+\Delta)}}k-\frac\pi4\right)
-\cos\!\left(4\pi\,\frac{\sqrt{nx}}k-\frac\pi4\right)\\
=-\int\limits_x^{x+\Delta}\frac{2\pi\,\sqrt n}{k\,\sqrt t}\,\sin\!\left(4\pi\,\frac{\sqrt{nt}}k-\frac\pi4\right)\mathrm dt.
\end{multline*}
Combining the facts above gives
\begin{align*}
&\sum_{x\leq n\leq x+\Delta}a(n)\,e\!\left(n\,\frac{h}k\right)\\
&=-\sqrt2\,k^{-1/2}\,x^{1/4}\sum_{n\leq N}a(n)\,n^{-1/4}\,e\!\left(-n\,\frac{\overline h}k\right)
\int\limits_x^{x+\Delta}\sin\!\left(4\pi\,\frac{\sqrt{nt}}k-\frac\pi4\right)\frac{\mathrm dt}{\sqrt t}\\
&\qquad+O(k^{1/2}\,N^{1/4}\,\Delta\,M^{-3/4})+O(k\,M^{1/2+\delta}\,N^{-1/2}).
\end{align*}

We will split the interval $\left[M,2M\right]$ into $\ll1+M/M_0$ closed subintervals of length at most $M_0\in\mathbb R_+$ which we allow to have only endpoints in common. We shall choose the precise value of $M_0$ later. Also, we shall focus on one of the subintervals, say $J=\left[M,2M\right]\cap\left[\alpha,\alpha+\Lambda\right]$, where $\alpha\in\left[M,2M\right]$ and $\Lambda\in\left]0,M_0\right]$, which we assume to contain exactly $R_0\in\mathbb Z_+$ of the original points $x_1$, \dots, $x_R$. Without loss of generality, we may assume these points to be $x_1$, \dots, $x_{R_0}$, ordered so that $x_1<x_2<\ldots<x_{R_0}$. Once we have estimated $R_0$ from above as $\ll\Upsilon$, where $\Upsilon$ does not depend on $J$ but only on $k$, $M$, $\Delta$, $\delta$ and $V$, we can estimate $R$ from above by
\[R\ll\Upsilon\left(1+\frac M{M_0}\right).\]
Of course, if the subinterval contains none of the original points, then it trivially contains $\ll\Upsilon$ points.

Let us consider the choice of $N$ in the truncated Voronoi identity. Provided that
\[N\ll M^3\,V^4\,\Delta^{-4}\,k^{-2}\quad\text{and}\quad
N\gg k^2\,M^{1+2\delta}\,V^{-2},\]
where the former implicit constant needs to be sufficiently small and the latter sufficiently large,
the two error terms can be absorbed to the left-hand side, which in turn is $\gg V$, and we get for each $r\in\left\{1,\ldots,R_0\right\}$ the estimate
\begin{align*}
V&\ll\sum_{x_r\leq n\leq x_r+\Delta}a(n)\,e\!\left(n\,\frac{h}k\right)\\
&\ll k^{-1/2}\,M^{-1/4}\int\limits_{x_r}^{x_r+\Delta}\,\left|\sum_{n\leq N}
a(n)\,n^{-1/4}\,e\!\left(-n\,\frac{\overline h}k\right)
\sin\!\left(4\pi\,\frac{\sqrt{nt}}k-\frac\pi4\right)\right|\mathrm dt.
\end{align*}

We shall actually choose $N$ to be as small as possible, namely $N=c\,k^2\,M^{1+2\delta}\,V^{-2}$ with a fixed constant $c\in\mathbb R_+$, though dependent on $\delta$, and sufficiently large so that we can indeed absorb the term $k\,M^{1/2+\delta}\,N^{-1/2}$ to the left-hand side. We will have $N\ll M^{3-\delta}\,V^4\,\Delta^{-4}\,k^{-2}$ since $V\gg k^{2/3}\,\Delta^{2/3}\,M^{-1/3+\delta}$, and so $N\ll M^3\,V^4\,\Delta^{-4}\,k^{-2}$ with a very small implicit constant, provided that $M$ is sufficiently large.
The requirement $N\ll M$ is satisfied thanks to the condition $V\gg k\,M^{2\delta}$.
Similarly, the requirement $N\gg1$ is satisfied thanks to the condition $V\ll k\,M^{1/2+\delta}$. Also, we point out that, assuming that $M$ is sufficiently large, we may assume that $N\geq1$ for if $N<1$, then $V\gg k\,M^{1/2+\delta}$, and we would have
\[k\,M^{1/2+\delta}\ll V\ll\sum_{x_1\leqslant n\leqslant x_1+\Delta}a(n)\,e\!\left(n\,\frac hk\right)\ll M^{1/2},\]
which is not possible for large~$M$.

Next we cover the interval $J$ with consecutive semiclosed intervals
\[I_1=\left[\alpha,\alpha+V\right[,\,
I_2=\left[\alpha+V,\alpha+2V\right[,\,\ldots,\,
I_\nu=\left[\alpha+\left(\nu-1\right)V,\alpha+\nu V\right[,\]
where the number of intervals $\nu\in\mathbb Z_+$ is chosen so that it satisfies simultaneously the conditions $\nu\geq2\,R_0$, $\nu>\left(M+\Delta\right)/V+1$ as well as $\nu\ll M/V$. Let us temporarily simplify notation by writing
\[\Sigma(t)=\sum_{n\leq N}a(n)\,n^{-1/4}\,e\!\left(-n\,\frac{\overline h}k\right)\sin\!\left(4\pi\,\frac{\sqrt{nt}}k-\frac\pi4\right).\]
Let us consider integers
\[1\leq a_1<a_2<a_3<\ldots<a_{R_0}\leq\nu\]
such that
\[x_1\in I_{a_1},\quad x_2\in I_{a_2},\quad\ldots,\quad x_{R_0}\in I_{a_{R_0}},\]
and let $L=1+\left\lceil\Delta/V\right\rceil$ so that
\[\left[x_r,x_r+\Delta\right]\subseteq I_{a_r}\cup I_{a_r+1}\cup\ldots\cup I_{a_r+L},\]
for each $r\in\left\{1,2,\ldots,R_0\right\}$.
Furthermore, let $t_1\in I_1$, $t_2\in I_2$, \dots, $t_\nu\in I_\nu$ be points such that
\[\left|\Sigma(t_\ell)\right|=\max_{t\in\overline{I_\ell}}\left|\Sigma(t)\right|\]
for each $\ell\in\left\{1,2,\ldots,\nu\right\}$. Now we may continue by estimating
\begin{align*}
V&\ll k^{-1/2}\,M^{-1/4}\sum_{\ell=a_r}^{a_r+L}\int\limits_{I_\ell}\left|\Sigma(t)\right|\mathrm dt
\ll k^{-1/2}\,M^{-1/4}\sum_{\ell=a_r}^{a_r+L}V\left|\Sigma(t_\ell)\right|.
\end{align*}

Next, let us pick odd indices
\[1\leq v_1<v_2<\ldots<v_{R_0}\leq\nu\]
and even indices
\[2\leq w_1<w_2<\ldots<w_{R_0}\leq\nu\]
so that the absolute values $\left|\Sigma(t_{v_\ell})\right|$ for $\ell\in\left\{1,2,\ldots,R_0\right\}$ are the $R_0$ largest, counting multiplicities, among
\[\left|\Sigma(t_1)\right|,\quad\left|\Sigma(t_3)\right|,\quad\left|\Sigma(t_5)\right|,\quad\ldots,\]
and similarly, so that the absolute values $\left|\Sigma(t_{w_\ell})\right|$ are the $R_0$ largest, counting multiplicities, among
\[\left|\Sigma(t_2)\right|,\quad\left|\Sigma(t_4)\right|,\quad\left|\Sigma(t_6)\right|,\quad\ldots\]
Then we may continue our estimations by
\begin{align*}
R_0&\ll k^{-1/2}\,M^{-1/4}\sum_{r=1}^{R_0}\sum_{\ell=a_r}^{a_r+L}\left|\Sigma(t_\ell)\right|\\
&\ll k^{-1/2}\,M^{\delta-1/4}\,\Delta\,V^{-1}\sum_{\ell=1}^{R_0}\left|\Sigma(t_{v_\ell})\right|
+k^{-1/2}\,M^{\delta-1/4}\,\Delta\,V^{-1}\sum_{\ell=1}^{R_0}\left|\Sigma(t_{w_\ell})\right|,
\end{align*}
where the last estimate follows straightforwardly from the fact that the sums over $\ell$ intersect by at most $L+1$ terms and $L+1\ll M^\delta\,\Delta\,V^{-1}$ thanks to the condition $V\ll\Delta\,M^\delta$.
Without loss of generality and to simplify notation, we may assume that the term involving $v_\ell$ is larger, and we therefore can strike out here the terms involving $w_\ell$, at the price of an extra constant factor~$2$. Now, by the Cauchy--Schwarz inequality,
\[R_0\ll k^{-1/2}\,M^{\delta-1/4}\,\Delta\,V^{-1}\sqrt{R_0}\,\sqrt{\sum_{\ell=1}^{R_0}\left|\Sigma(t_{v_\ell})\right|^2},\]
so that
\[R_0\ll k^{-1}\,M^{2\delta-1/2}\,\Delta^2\,V^{-2}\sum_{\ell=1}^{R_0}\left|\Sigma(t_{v_\ell})\right|^2.\]
We split the sum $\Sigma(\cdot)$ dyadically, and write $\sin$ in terms of $e(\pm\ldots)$.  Then we continue by applying  Bombieri's lemma, and estimating $\log M\ll M^\delta$,
\begin{align*}
&R_0\ll k^{-1}\,M^{2\delta-1/2}\,\frac{\Delta^2}{V^{2}}\sum_{r=1}^{R_0}\left|\sum_{n\leq N}\frac{a(n)}{n^{1/4}}\,e\!\left(-n\,\frac{\overline h}k\right)\sin\!\left(4\pi\,\frac{\sqrt{nt_{v_r}}}k-\frac\pi4\right)\right|^2\\
&\ll k^{-1}\,M^{3\delta-1/2}\,\frac{\Delta^2}{V^{2}}\sum_\pm\sum_{r\leq R_0}\sum_{\substack{U\leq N/2\\\text{dyadic}}}\left|\sum_{U<n\leq2U}\frac{a(n)}{n^{1/4}}\,e\!\left(-n\,\frac{\overline h}k\right)e\!\left(\pm\frac{2\sqrt{nt_{v_r}}}k\right)\right|^2\\
&\ll k^{-1}\,M^{4\delta-1/2}\,\frac{\Delta^2}{V^{2}}\sum_\pm\max_{U\leq N/2}\sum_{r\leq R_0}\left|\sum_{U<n\leq2U}\frac{a(n)}{n^{1/4}}\,e\!\left(-n\,\frac{\overline h}k\right)e\!\left(\pm\frac{2\sqrt{nt_{v_r}}}k\right)\right|^2\\
&\ll k^{-1}\,M^{4\delta-1/2}\,\frac{\Delta^2}{V^{2}}\max_{U\leq N/2}U^{1/2}\max_{r\leq R_0}\sum_{s=1}^{R_0}\left|\sum_{U<n\leq2U}e\!\left(\frac{2\,\sqrt n\left(\sqrt{t_{v_r}}-\sqrt{t_{v_s}}\right)}k\right)\right|.
\end{align*}
The terms with $s=r$ are easily seen to contribute
\[\ll k^{-1}\,M^{4\delta-1/2}\,\Delta^2\,V^{-2}\,N^{3/2}
\ll k^2\,M^{1+7\delta}\,\Delta^2\,V^{-5}.\]
To estimate the remaining terms, those with $s\neq r$, we first observe that
\[\bigl|\sqrt{t_{v_r}}-\sqrt{t_{v_s}}\bigr|\asymp\int\limits_{t_{v_s}}^{t_{v_r}}\frac{\mathrm dt}{\sqrt t}\asymp\frac{\left|t_{v_r}-t_{v_s}\right|}{M^{1/2}}\ll\frac{M_0}{M^{1/2}},\]
and so we may use the theory of exponent pairs to estimate
\begin{align*}
&\sum_{U<n\leq2U}e\!\left(\frac{2\,\sqrt n\left(\sqrt{t_{v_r}}-\sqrt{t_{v_s}}\right)}k\right)\\
&\qquad\ll k^{-p}\,\bigl|\sqrt{t_{v_r}}-\sqrt{t_{v_s}}\bigr|^p\,U^{q-p/2}+\frac{k\,U^{1/2}}{\left|\sqrt{t_{v_r}}-\sqrt{t_{v_s}}\right|}\\
&\qquad\ll k^{-p}\,M_0^p\,M^{-p/2}\,U^{q-p/2}+\frac{k\,U^{1/2}\,M^{1/2}}{\left|t_{v_r}-t_{v_s}\right|}.
\end{align*}
Thus, the remaining terms contribute, estimating again $\log M\ll M^\delta$ and remembering that $q\geq1/2\geq p$ so that $1/2+q-p/2>0$,
\begin{align*}
&\ll k^{-1}\,M^{4\delta-1/2}\,\Delta^2\,V^{-2}\\
&\qquad\cdot\max_{U\leq N/2}U^{1/2}\left(R_0\,k^{-p}\,M_0^p\,M^{-p/2}\,U^{q-p/2}+\max_{r\leq R_0}\sum_{s\neq r}\frac{k\,U^{1/2}\,M^{1/2}}{\left|t_{v_r}-t_{v_s}\right|}\right)\\
&\ll k^{-1}\,M^{4\delta-1/2}\,\Delta^2\,V^{-2}\,N^{q+1/2-p/2}\,R_0\,k^{-p}\,M_0^p\,M^{-p/2}\\
&\qquad+k^{-1}\,M^{5\delta-1/2}\,\Delta^2\,V^{-2}\,N\,k\,M^{1/2}\,V^{-1}\\
&\ll R_0\cdot k^{2q-2p}\,\Delta^2\,M_0^p\,M^{q-p+\delta\left(5+2q-p\right)}\,V^{p-2q-3}
+k^2\,M^{1+7\delta}\,\Delta^2\,V^{-5}.
\end{align*}
We shall choose $M_0$ to be as large as possible so that the first term on the right-hand side will be $\ll R_0$ with a small implicit constant and can therefore be absorbed to the left-hand side. That is, we shall choose
\[M_0\asymp k^{2-2q/p}\,\Delta^{-2/p}\,M^{1-q/p+\delta\left(1-2q/p-5/p\right)}\,V^{2q/p-1+3/p}.\]
Thus, we have estimated $R_0$ as
\[\ll k^2\,M^{1+7\delta}\,\Delta^2\,V^{-5}.\]
The total estimate for $R$ is therefore
\begin{align*}
R&\ll k^2\,M^{1+7\delta}\,\Delta^2\,V^{-5}\left(1+\frac M{M_0}\right)\\
&\ll k^2\,M^{1+7\delta}\,\Delta^2\,V^{-5}\\
&\qquad+k^2\,M^{1+7\delta}\,\Delta^2\,V^{-5}\,M\,k^{2q/p-2}\,\Delta^{2/p}\,M^{-1+q/p+\delta\left(5/p+2q/p-1\right)}\,V^{-2q/p+1-3/p}\\
&\ll k^2\,M^{1+7\delta}\,\Delta^2\,V^{-5}
+k^{2q/p}\,\Delta^{2+2/p}\,M^{1+q/p+\delta\left(6+5/p+2q/p\right)}\,V^{-2q/p-4-3/p}.
\end{align*}

\section{Proof of Theorem \ref{moments-from-large-values}}

To estimate the integral
\[\int\limits_M^{2M}\left|\sum_{x\leq n\leq x+\Delta}a(n)\,e\!\left(n\,\frac{h}k\right)\right|^A\mathrm dx,\]
we estimate it separately in the regions where the integrand is $\leq \left(M^{2\delta}\,V_0\right)^A$ and $\geq \left(M^{2\delta}\,V_0\right)^A$, where $\delta\in\mathbb R_+$ is small and fixed. The former values contribute $\ll M^{1+2\delta A}\,V_0^A$. To estimate the contribution from the latter values, we split the remaining value range dyadically into intervals of the shape $\left[V,2V\right]$ with $V\in\left[V_0\,M^{2\delta},\infty\right[$. If necessary, we extend the last interval, losing at most a constant factor in the estimations. The number of subintervals is $\ll\log M\ll M^\varepsilon$. For each value interval, we choose a maximal number of points $x_1$, \dots, $x_{R(V)}$ from the interval $\left[M,2M\right]$ so that
\[\left|\sum_{x_r\leq n\leq x_r+\Delta}a(n)\,e\!\left(n\,\frac{h}k\right)\right|\in\left[V,2V\right]\]
for each $r\in\left\{1,\ldots,R(V)\right\}$ and that $\left|x_r-x_s\right|\geq V$ for all $r,s\in\left\{1,\ldots,R(V)\right\}$ with $r\neq s$. We recall that we certainly have $R(V)=0$ if $V\gg\sqrt M$ or $V\gg \Delta\,M^{\delta}$ or $V\gg k^\alpha\,\Delta^\beta\,M^{\gamma+2\delta}$. Now the contribution from the large values of the integrand is bounded by
\[\ll_A\sum_VV\cdot R(V)\,V^A,\]
where the summation over $V$ is dyadic. Using Theorem \ref{large-values}, this is
\begin{multline*}\ll_\delta\sum_V\bigl(k^2\,M^{1+7\delta}\,\Delta^2\,V^{-5}
\bigr.\\\bigl.+k^{2q/p}\,\Delta^{2+2/p}\,M^{1+q/p+\delta\left(6+5/p+2q/p\right)}\,V^{-2q/p-4-3/p}\bigr)V^{A+1}.\end{multline*}
In each term $V$ is estimated from below by $V_0$ or from above by $k^\alpha\,\Delta^\beta\,M^{\gamma+2\delta}$, depending on whether the final exponent of $V$ is negative or positive.
Upon letting $\delta$ have smaller and smaller values, the first term in the parentheses gives rise to $\Phi$ and the second to $\Psi$.

\section{Proof of Theorem \ref{example-bound1}}

\begin{proof}[Proof of Theorem \ref{example-bound1}]
We choose $p=q={1}/{2}$. By Theorem 5.5 in \cite{Ernvall-Hytonen--Karppinen2008} $$\sum_{x\leq n\leq x+\Delta}a(n)\,e\!\left(n\,\frac{h}{k}\right)\ll\Delta^{1/6}\,M^{1/3+\varepsilon}
\ll M^{5/(12\cdot6)}\,M^{1/3+\varepsilon}\ll M^{29/72+\varepsilon}.$$ Notice that when $k\gg M^{1/9}$, this bound is superior to $k^{1/4}\,M^{3/8+\varepsilon}$ from \cite{Vesalainen2014}. Hence using Theorem \ref{moments-from-large-values}, we obtain, for any $V_0\in\left[1,\infty\right[$ with $k\ll V_0\ll M^{29/72}$ and $V_0\gg k^{2/3}\,\Delta^{2/3}\,M^{-1/3}$, that
\begin{multline*}
\int\limits_M^{2M}\left|\sum_{x\leq n\leq x+\Delta}a(n)\,e\!\left(n\,\frac{h}{k}\right)\right|^Adx\\ \ll M^{1+\varepsilon}\,V_0^A+k^2\,M^{11/6+29(A-4)/72+\varepsilon}+k^2\,M^{9/2+29(A-11)/72+\varepsilon}.
\end{multline*}
The term $V_0$ does not appear anywhere else except in the main term, so we can choose it to be as small as possible, namely $k$. For this choice, we also have $V_0\gg k^{2/3}\,M^{-1/18}=k^{2/3}\,\Delta^{2/3}\,M^{-1/3}$. The contribution of the main term is $k^A\,M^{1+\varepsilon}$. The three terms satisfy
\[
k^2\,M^{11/6+29(A-4)/72+\varepsilon}\gg k^2\,M^{9/2+29(A-11)/72+\varepsilon},\]
and
\[k^2\,M^{11/6+29(A-4)/72+\varepsilon}\gg k^A\,M^{1+\varepsilon},\] and we get the claimed bound.
\end{proof}

\section{Proof of Theorem \ref{sovellus2}}

\begin{proof}[Proof of Theorem \ref{sovellus2}] We will first apply Theorem \ref{esa_moments} with the exponent pair $p={4}/{18}$ and $q={11}/{18}$ and the parameters $\alpha=\gamma=\delta=0$, $k=1$ and $\beta={1}/{3}$. Now the main term becomes $M^{A/4+1}$. The term $\Phi$ becomes $M^{(A+1)/3+\varepsilon}$ and the term $\Psi$ becomes $M^{A/4+1+\varepsilon}$. Since $\left({A+1}\right)/{3}<{A}/{4}+1$ exactly when $A<8$, we have now derived
\begin{equation*}
\int\limits_M^{2M}\left|\sum_{x\leq n\leq x+\Delta}a(n)\right|^Adx\ll \\ \begin{cases}M^{A/4+1+\varepsilon} & \textrm{when }A\leq 8,\\ M^{(A+1)/3+\varepsilon} & \textrm{when }A\geq 8.\end{cases}
\end{equation*}

We will now apply Theorem \ref{moments-from-large-values} with exponent pair $p=q={1}/{2}$. By the trivial estimate and by the estimate for a long sum, we know that
\[
\sum_{x\leq n\leq x+\Delta}a(n)\ll \min \left(\Delta\, M^{\varepsilon},M^{1/3+\varepsilon}\right).
\]
Using these bounds we obtain
\begin{multline*}
\int\limits_M^{2M}\left|\sum_{x\leq n\leq x+\Delta}a(n)\right|^Adx\\
\ll
\begin{cases}M^{1+\varepsilon}\,V_0^{A}+\Delta^{A-2}\,M^{1+\varepsilon}+\Delta^{6}\,M^{2+\varepsilon}\,V_0^{A-11}&\text{when $\Delta\ll M^{1/3}$},\\
M^{1+\varepsilon}\,V_0^{A}+\Delta^2\,M^{(A-1)/3+\varepsilon}+\Delta^{6}\,M^{2+\varepsilon}\,V_0^{A-11}&\text{when $\Delta\gg M^{1/3}$}.
\end{cases}
\end{multline*}

Let us now choose $V_0$ so that the first and the last term are the same (up to an epsilon):
\[
M\,V_0^{A}=\Delta^{6}\,M^{2}\,V_0^{A-11},
\]
which is equivalent with $V_0=\Delta^{6/11}\,M^{1/11}$. Clearly, this choice satisfies $V_0\gg1=k$ and is easily seen to satisfy $V_0\gg \Delta^{2/3}\,M^{-1/3}$. It also satisfies $V_0\ll \Delta\,M^\varepsilon$ and $V_0\ll M^{1/3+\varepsilon}$ since $M^{1/5}\ll\Delta\ll M^{4/9}$. The estimate becomes now
\begin{multline*}
\int\limits_M^{2M}\left|\sum_{x\leq n\leq x+\Delta}a(n)\right|^Adx
\\
\ll \begin{cases}
M^{1+A/11+\varepsilon}\,\Delta^{6A/11}+\Delta^{A-2}\,M^{1+\varepsilon}&\text{when $
\Delta\ll M^{1/3}$,}\\
\Delta^2 \,M^{(A-1)/3+\varepsilon}+M^{1+A/11+\varepsilon}\,\Delta^{6A/11}&\text{when $\Delta\gg M^{1/3}$}.
\end{cases}\end{multline*}

When $\Delta\ll M^{1/3}$, we have $\Delta^{5A/11-2}\ll M^{A/11}$ and hence $M^{A/11}\,\Delta^{6A/11}\gg \Delta^{A-2}$. Thus, $M^{1+A/11+\varepsilon}\,\Delta^{6A/11}\gg \Delta^{A-2}\,M^{1+\varepsilon}$.

When $\Delta \gg M^{1/3}$, we have
$\Delta\gg M^{1/3}\gg M^{(4A-22)/(9A-33)}$, so that 
$\Delta^2\, M^{(A-1)/3+\varepsilon}\ll \Delta^{6A/11}M^{1+A/11+\varepsilon}$. We have now derived
\begin{equation*}
\int\limits_M^{2M}\left|\sum_{x\leq n\leq x+\Delta}a(n)\right|^Adx\ll 
M^{1+A/11+\varepsilon}\,\Delta^{6A/11}.
\end{equation*}
Finally, the proof is completed by comparing the above bounds separately in the cases $A\geq8$ and $A\leq8$.
\end{proof}

\section*{Acknowledgements}

The first author was funded by the Academy of Finland project 138337, by the Finnish Cultural Foundation, and by the Ruth och Nils-Erik Stenb\"acks stiftelse. The second author was funded by the Academy of Finland through the Finnish Centre of Excellence in Inverse Problems Research and the projects 276031, 282938, 283262 and 303820, by the Magnus Ehrnrooth Foundation, by the Finnish Cultural Foundation, by the Foundation of Vilho, Yrj\"o and Kalle V\"ais\"al\"a, and by the Basque Government through the BERC 2014--2017 program and by Spanish Ministry of Economy and Competitiveness MINECO: BCAM Severo Ochoa excellence accreditation SEV-2013-0323.

\footnotesize
\normalsize

\end{document}